\newcommand{\R}{\mathcal{R}}
\newcommand{\M}{\mathcal{M}}
\newcommand{\n}{\mathcal{N}}
\newcommand{\F}{\mathcal{F}}
\newcommand{\Z}{\Bbb Z}
\newcommand{\I}{\Bbb I}
\newcommand{\RR}{\Bbb R}
\newcommand{\C}{\Bbb C}
\newtheorem{theorem}{Theorem}
\newtheorem{proposition}[theorem]{Proposition}
\newtheorem{cor}[theorem]{Corollary}
\newtheorem{lemma}[theorem]{Lemma}
\newtheorem{remark}[theorem]{Remark}
\DeclareMathOperator{\N}{\mathbb{N}}
\numberwithin{theorem}{section}
\begin{document}
\title[Spherical Means in the Hypercube]{Dimension-Free $L^p$-Maximal Inequalities for Spherical Means in the Hypercube}
\author{Ben Krause}
\address{
Department of Mathematics
The University of British Columbia \\
1984 Mathematics Road
Vancouver, B.C.
Canada V6T 1Z2}
\email{benkrause@math.ubc.ca}
\date{\today}
\maketitle

\section{Abstract}
We extend the main result of Harrow, Kolla, and Schulman  -- the existence of dimension-free $L^2$-bounds for the spherical maximal function in the hypercube, $\{0,1\}^N$ -- to all $L^p, p > 1$. Our approach is motivated by the spectral technique developed by Nevo and Stein, and by Stein, in the context of pointwise ergodic theorems on general groups. We provide an example which demonstrates that no dimension-free weak-type $1-1$ bound exists at the endpoint.

\section{Introduction}

Let $\mathbb{I}^N = \{ 0, 1\}^N$ denote the $N$-dimensional hypercube equipped with Hamming metric,
\[ |y|= \left|\big( y(1), \dots, y(N) \big) \right|:= \#\{ 1 \leq i \leq N : y(i) = 1 \};\]
for functions on $\mathbb{I}^N$, 
\[ f : \mathbb{I}^N \to \mathbb{C},\]
we define the $L^p$ norms, $1 \leq p < \infty$
\[ \| f \|_{L^p(\I^N)} := \left( \sum_{x \in \I^N} |f(x)|^p \right)^{1/p},\]
and 
\[ \|f\|_{L^\infty(\I^N)} := \max_{x \in \I^N} |f(x)|.\]
Recall that the dual group of $\mathbb{I}^N$ is itself $\mathbb{I}^N$; we will use lower case letters (e.g.\ $x,y,\dots$) to denote points in the group, while 
capital letters (e.g.\ $S, E, \dots$) will be used to denote frequencies in the dual group; for such frequencies, we define the ($L^2$-normalized) characters
\[ \chi_S(x):= \frac{1}{2^{N/2}} (-1)^{x \cdot S} := \frac{1}{2^{N/2}} (-1)^{\sum_{i=1}^N x(i) S(i)} =
\frac{1}{2^{N/2}} \prod_{i : S(i) = 1} (-1)^{x(i)}; \]
and the Fourier transform
\[ \F f(S) = \hat{f}(S) = \sum_{x \in \I^N} f(x) \chi_S(x);\]
we remark that on $\I^N$, Fourier transform and inverse Fourier transform coincide, i.e.\ $\F \F$ acts as the identity operator.

For $0 \leq k \leq N$, let
\[\sigma_k := \frac{1}{|\{ |x| = k \} |} \mathbf{1}_{ \{ |x| = k\}} = \frac{1}{\binom{N}{k}} \mathbf{1}_{\{|x| = k\}},
\]
denote the $L^1$-normalized indicator function of the $k$-sphere. The main goal of this paper will be to establish \emph{dimension-independent} $L^p \to L^p$ estimates on the following maximal function,
\[ f^{**}(x):= \sup_{0 \leq k \leq N} |\sigma_k*f|,\]
where convolution is defined
\[ \sigma_k*f(x) := \sum_{y \in \I^N} f(x-y) \sigma_k(y);\]
note that we are treating $\I$ as the field with two elements, so subtraction (equivalently, addition) is performed componentwise $\mod 2$. We call attention to the fact that convolution with the $\{ \sigma_k\}$ are (positive) $L^1$ and $L^\infty$ contractions, and thus are $L^p$ contractions for any $1 \leq p \leq \infty$:
\begin{equation}\label{e:contr}
\| \sigma_k *f \|_{L^p(\mathbb{I}^N)} \leq \| f \|_{L^p(\mathbb{I}^N)}
\end{equation}
for any $0 \leq k \leq N$ and any $1 \leq p \leq \infty$.
Now, since we are studying a maximal function taken over positive operators, there is no loss of generality in assuming that each function considered in this paper is non-negative. Furthermore, setting
\[ f^{*}(x):= \sup_{0 \leq k \leq N/2} |\sigma_k*f|,\]
and noting that $\sigma_{N-k} = \sigma_N * \sigma_k$, we may estimate
\[ f^{**} \leq f^* + (\sigma_N*f)^*;\]
for the purposes of establishing $L^p$-estimates on $f^{**}$, it is therefore enough to consider the maximal function $f^*$ (cf.\ \eqref{e:contr}). It is this operator which we proceed to analyze.

The motivation for our work comes from a recent paper of Harrow, Kolla, and Schulman \cite{HKS}, who established the following two results:

\begin{proposition}[cf. \cite{HKS} Lemmas 10-11]\label{smooth}
The smooth maximal function,
\[ \M_{smooth} f := \sup_{0 \leq K \leq N/2} \frac{1}{K+1} | \sum_{n \leq K} \sigma_n *f | \]
satisfies dimension independent weak-type $(1,1)$ inequalities. In particular, there exists an absolute constant $A_{1,1}$ \emph{independent of $N$} so that for each $\lambda \geq 0$
\[ \lambda \cdot |\{ x \in \I^N : \M_{smooth} f > \lambda \}| \leq A_{1,1} \| f \|_{L^1(\I^N)}\]
for each $f: \I^N \to \C$.
\end{proposition}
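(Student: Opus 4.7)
The plan is to dominate $\M_{smooth}$ by the Cesaro maximal function along random monotone paths in the cube, and then to extract a dimension-free weak-$(1,1)$ bound via a Fubini-type argument. First, by positivity of the spheres $\sigma_n$, I would restrict to $f \geq 0$ and rewrite $\M_{smooth}f = \sup_{0 \leq K \leq N/2} \mu_K * f$, where $\mu_K := \tfrac{1}{K+1}\sum_{n=0}^K \sigma_n$ is the probability measure placing equal mass $\tfrac{1}{K+1}$ on each Hamming sphere of radius $n \leq K$. In contrast with the uniform ball measure, which weights spheres by their $\binom{N}{n}$-sized volume, this flat-across-radii weighting is the conceptual reason one should expect a dimension-free bound.

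The key identity is the \emph{random monotone path representation}: for a uniformly random permutation $\pi \in S_N$, set $y_j(\pi) := e_{\pi(1)} + \cdots + e_{\pi(j)}$, so that $y_j(\pi)$ is uniformly distributed on the $j$-sphere. Then $\sigma_j * f(x) = \EE_\pi[f(x + y_j(\pi))]$, and by $\sup \EE \leq \EE \sup$,
\[ \M_{smooth} f(x) \leq \EE_\pi\Bigl[\sup_{0 \leq K \leq N/2} \tfrac{1}{K+1}\sum_{j=0}^K f(x + y_j(\pi))\Bigr] =: \EE_\pi[M^\pi f(x)]. \]
For each fixed $\pi$, $M^\pi f$ is the one-dimensional Cesaro maximal function of the sequence $j \mapsto f(x + y_j(\pi))$ evaluated at $j = 0$. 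Since the one-dimensional Cesaro maximal operator has absolute-constant weak-$(1,1)$ bounds in the $j$-variable, the remaining task is to average this bound over $x$ and $\pi$ in a way that does not lose a factor of $N$.

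The main obstacle is precisely this averaging step: the naive pointwise bound $M^\pi f \leq \sum_{j=0}^{N/2} f(\,\cdot + y_j(\pi))$ only yields $L^1 \to L^1$ with constant $N/2+1$, which is not dimension-free. My strategy would be to exploit the measure-preserving bijections $x \mapsto x - y_j(\pi)$ to absorb each shift, and to use the randomness of $\pi$ (with its large symmetric-group invariance) to cancel the $N/2+1$ path-length factor against the $N/2+1$ in the Cesaro denominator. Concretely, I would layer-cake the 1-d HL weak-type bound for each fixed pair $(x,\pi)$, then Fubini the sum over $x$ with the sum over $j$, yielding a double integral that — after using the invariance of $|\I^N|=2^N$ under each shift — collapses to $C\lambda^{-1}\|f\|_{L^1}$ with an absolute constant. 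Executing this Fubini rigorously, while ensuring that the truncation $K \leq N/2$ does not introduce dimension-dependent boundary effects, is the technical heart of the proof.
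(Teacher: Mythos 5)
Your opening reductions are fine: $\sigma_j*f(x)=\EE_\pi\,f(x+y_j(\pi))$ for a uniformly random monotone path, and the domination $\M_{smooth}f\le \EE_\pi\,M^\pi f$ by $\sup\EE\le\EE\sup$. But from that point on the argument has two genuine gaps, and you have (candidly) left exactly the missing mathematics as ``the technical heart.'' First, for a \emph{fixed} $\pi$ the operator $M^\pi$ is not the one-dimensional Cesaro maximal operator in any transferable sense: the path $x,\,x+y_1(\pi),\,x+y_2(\pi),\dots$ is not the orbit of a single measure-preserving transformation (the increment $e_{\pi(j)}$ changes at every step), so Calder\'on transference does not apply, and the weak-type bound ``in the $j$-variable'' controls the distribution of the maximal function along a single path in $j$, not the distribution in $x$ of its value at the single time $j=0$, which is what $\M_{smooth}$ sees. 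The sets $x+\{y_0,\dots,y_K\}$ are staircases without the engulfing/Vitali property, and the obvious substitute (Chebyshev at each fixed $K$, using that each $\frac1{K+1}\sum_{n\le K}\sigma_n$ is an $L^1$-contraction, then a union over dyadic $K$) yields a constant of order $\log N$, not an absolute one. Second, and more fundamentally, even if each $M^\pi$ were weak $(1,1)$ with an absolute constant, the pointwise bound $\M_{smooth}f\le\EE_\pi M^\pi f$ does not transfer that bound: $L^{1,\infty}$ is not normable, there is no Minkowski/Fubini inequality for the weak quasi-norm with a uniform constant, and averages of weak-$(1,1)$ operators need not be weak $(1,1)$. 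One cannot write anything like $\lambda\,|\{\EE_\pi M^\pi f>\lambda\}|\le \EE_\pi\,\lambda\,|\{M^\pi f>\lambda\}|$, and your proposed ``layer-cake then Fubini'' step is precisely an attempt to do this; as sketched it is a hope, not an argument.

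For comparison, the paper does not prove this proposition by path-averaging at all: it is quoted from \cite{HKS} (Lemmas 10--11), where the Cesaro sums $\frac{1}{K+1}\sum_{n\le K}\sigma_n*f$ are dominated pointwise (for $f\ge 0$) by time averages $\frac1T\int_0^T \n_t f\,dt$ of the noise semigroup, and the weak-type $(1,1)$ bound then follows from the Hopf--Dunford--Schwartz maximal ergodic theorem, whose constant is universal for positive $L^1$--$L^\infty$ contraction semigroups and hence independent of $N$. That semigroup comparison is exactly the device that replaces your problematic averaging-over-$\pi$ step: the average over random paths of a given length is, in effect, already encoded in the spherical/ball averages, and the dimension-free input comes from abstract ergodic theory rather than from a one-dimensional covering bound along each path. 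If you want to pursue your route, you would need either a bounded-overlap covering argument for the staircase sets that is uniform in $\pi$ (which the failure of engulfing makes doubtful), or to perform the $\pi$-average inside the operator before taking any weak-type norms---which essentially reconstructs the semigroup/ball comparison of \cite{HKS}.
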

\begin{remark}
By Marcinkiewicz interpolation (cf.\ e.g.\ \cite{Z}) against the trivial $L^\infty$ bound, this result implies the existence of dimension-independent $L^p$ bounds on $\M_{smooth}$: for each $p >1$ there exist dimension-independent constants, $A_p$, so that
\[ \| \M_{smooth} f \|_{L^p(\I^N)} \leq A_p \|f\|_{L^p(\I^N)}.\]
\end{remark}

This proposition proved a key ingredient in establishing the main result of \cite{HKS}:
\begin{theorem}
There exists a constant $C_2>0$ so that for all $N$
\[ \| f^* \|_{L^2(\I^N)} \leq C_2 \|f\|_{L^2(\I^N)}\]
for each $f : \I^N \to \C$.
\end{theorem}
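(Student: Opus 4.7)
The plan is to isolate the smoothed maximal function of Proposition \ref{smooth} and reduce the remainder to a Fourier-multiplier estimate on the Walsh side, in the spirit of the spectral approach of \cite{S,NS}.

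Writing $A_k := \sigma_k*\cdot$ and $M_K := \frac{1}{K+1}\sum_{n=0}^{K} A_n$, the pointwise decomposition $A_k f \leq M_k f + |A_k f - M_k f|$ (valid since $f\geq 0$) gives
\[ f^* \leq \M_{smooth} f + \sup_{0 \leq k \leq N/2} |A_k f - M_k f|. \]
By Proposition \ref{smooth} and the interpolation in the remark, $\|\M_{smooth} f\|_{L^2} \lesssim \|f\|_{L^2}$, so it suffices to establish the dimension-free bound
\[ \Big\|\sup_{0 \leq k \leq N/2} |A_k f - M_k f|\Big\|_{L^2(\I^N)} \lesssim \|f\|_{L^2(\I^N)}. \]

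Being convolution operators, $A_k$ and $M_k$ are simultaneously diagonalised by the characters $\chi_S$: write $A_k \chi_S = \lambda_k(S)\chi_S$ and $M_k \chi_S = \mu_k(S)\chi_S$, where $\lambda_k(S)$ depends only on $s = |S|$ through the normalised Krawtchouk polynomial $K_k(s,N)/\binom{N}{k}$, and $\mu_k(S) = \frac{1}{k+1}\sum_{n\leq k}\lambda_n(S)$. Dominating the supremum over $k$ by a square function and invoking Plancherel reduces the remaining estimate to the spectral-multiplier inequality
\[ \sup_S\ \sum_{k=0}^{N/2} |\lambda_k(S)-\mu_k(S)|^2 \leq C \]
with $C$ independent of $N$.

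The main obstacle is that this naive multiplier bound \emph{fails}: at $|S|=1$ one computes $\lambda_k(S) = 1-2k/N$ and $\mu_k(S) = 1-k/N$, so $\lambda_k(S)-\mu_k(S) = -k/N$, whose squares sum to order $N$. Thus a refined decomposition is needed. The natural refinement is a Littlewood--Paley-type splitting of the frequency $S$ into dyadic shells $|S|\in[2^j, 2^{j+1})$ matched with a compatible truncation in $k$: on low-frequency shells one exploits the smoothness of $\lambda_k(S)$ in $k$ via additional Cesàro-type averaging (absorbing the slow-decay regime into a variant of $\M_{smooth}$), while on high-frequency shells one uses the rapid decay of Krawtchouk polynomials to obtain a summable tail that feeds into a genuine square function. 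Carrying out this decomposition with dimension-free constants -- i.e., the sharp, uniform-in-$N$ Krawtchouk polynomial analysis -- is the main technical step, and is the content of the estimates in \cite{HKS}.
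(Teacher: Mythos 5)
Your opening reduction and your diagnosis are both sound: splitting $f^*\leq \M_{smooth}f+\sup_k|A_kf-M_kf|$, passing to multipliers by Plancherel, and observing that the naive bound $\sup_S\sum_k|\lambda_k(S)-\mu_k(S)|^2\lesssim 1$ \emph{fails} (your $|S|=1$ computation $\lambda_k-\mu_k=-k/N$ is correct) is exactly the right way to see where the difficulty sits, and it is the same general Stein--Nevo--Stein comparison philosophy the paper uses. But the proposal stops precisely where the theorem lives. The ``refined decomposition'' into dyadic frequency shells matched with truncations in $k$ is only gestured at: no shell-wise operators are defined, no estimates are stated, the loss incurred by summing over shells (which is where dimension-dependence typically creeps back in) is not addressed, and the final sentence defers the ``main technical step'' to the estimates of \cite{HKS} --- that is, to the paper whose main result is the very statement being proved. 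As written this is a genuine gap, not a proof. A further omission: the Krawtchouk decay $|\kappa_k^N(r)|\leq e^{-ckr/N}$ is only available for $r\leq N/2$, and your sketch never explains how high frequencies $|S|>N/2$ (where $\kappa_k^N(N-r)=(-1)^k\kappa_k^N(r)$ oscillates) are handled; in the paper this is exactly the role of the parity reduction $\sigma_1*\sigma_{2n-2}+\sigma_1*\sigma_{2n}\geq\sigma_{2n-1}$, which restricts everything to even radii.

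The fix that actually works is not a frequency-dyadic splitting but summation by parts in the \emph{radius} variable. After the even-radius reduction, one writes $\sigma_{2n}*f$ as its Ces\`aro average plus $\frac{1}{n+1}\sum_{k\leq n}k\,(\sigma_{2k}-\sigma_{2k-2})*f$, and Cauchy--Schwarz dominates the rough maximal function by the smooth one plus the weighted square function $R_1f=\bigl(\sum_k(k+1)|(\sigma_{2k}-\sigma_{2k-2})*f|^2\bigr)^{1/2}$ (this is Lemma \ref{3} with $m=1$). By Plancherel the required dimension-free bound is $\sum_k(k+1)\,|\kappa_r^N(2k)-\kappa_r^N(2k-2)|^2\lesssim 1$ uniformly in $r$ and $N$, and here is the gain your naive comparison lacked: by Lemma \ref{diff}, $\kappa_r^N(2k)-\kappa_r^N(2k-2)=-4\frac{\binom{N-2}{r-1}}{\binom{N}{r}}\kappa_{r-1}^{N-2}(2k-2)$, so each consecutive difference carries a factor of size about $r(N-r)/N^2$, which together with the decay $e^{-crk/N}$ gives $\sum_k(k+1)(r/N)^2e^{-2crk/N}\lesssim 1$ (Proposition \ref{main} with $m=1$). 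Note how this repairs exactly your $|S|=1$ example: the consecutive difference is $O(1/N)$ with weight $k$, instead of the discrepancy $\lambda_k-\mu_k=O(k/N)$ with weight $1$. Replacing your dyadic-shell sketch by this summation-by-parts/weighted-square-function step would turn the plan into a complete proof.
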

\begin{remark}
Once again, by interpolation this implies the existence of dimension-independent constants $C_p >0$ for $p \geq 2$ so that
\[ \|f^* \|_{L^p(\I^N)} \leq C_p \|f\|_{L^p(\I^N)} \]
for each $f : \I^N \to \C$.
\end{remark}

Their argument is an elegant application of Stein's method \cite{S}, used in extending the well-known Hopf-Dunford-Schwartz maximal theorem for semi-groups to more ``singular'' maximal averages.

The argument of \cite{HKS} breaks into two main steps:

\begin{enumerate}
\item One first establishes Proposition \ref{smooth} by comparing the operator $\M_{smooth}$ with the maximal function
\[ \sup_{0 < T < \infty} \frac{1}{T} \int_0^T \n_t f(x) \ dt,\]
where $\n_t$ is the noise semi-group from Boolean Analysis \cite[\S 4]{HKS}, 
\[ \n_t f(x) := \sum_{S \in \I^N} e^{-t|S|} \hat{f}(S) \chi_S(x), \ 0 < t < \infty;\]
\item The ``rougher'' maximal function $f^*$ is compared to the ``smoother'' maximal function in $L^2$ by using Littlewood-Paley theory on the group $\I^N$. The key tool is an analysis of the (radial) spherical multipliers
\[ \F \sigma_k(S) := \kappa_{k}^N(S) ,\]
the \emph{Krawtchouk} polynomials, which are introduced and discussed in \cite[\S 2]{HKS}.
\end{enumerate}

We continue the analysis of the Krawtchouk polynomials to the extent that we are able to bring the more general comparison technique of \cite{S}, \cite{NS} to bear. For a Euclidean analogue of this technique used in a similar study of ``rough'' maximal functions, we refer the reader to e.g.\ \cite[\S XI, 2]{S1}.

Our main result is the below theorem.

\begin{theorem}\label{big}
For any $p > 1$, there exist absolute, dimension-independent constants $C_p >0$ so that
\[ \| f^* \|_{L^p(\I^N)} \leq C_p \| f \|_{L^p(\I^N)} \]
for each $f : \I^N \to \C$.
\end{theorem}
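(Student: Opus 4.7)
The plan is to adapt Stein's technique of comparing a ``rough'' maximal operator to a ``smoother'' one via a Littlewood-Paley square function attached to the spectral multipliers, in the spirit of \cite[\S XI.2]{S1} and \cite{NS}. The Cesaro-smoothed operator $\M_{smooth}$ of Proposition \ref{smooth}, together with its $L^p$-boundedness for $p>1$ noted in the subsequent remark, will furnish the ``smooth'' side of the comparison.

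First I would apply the summation-by-parts identity $\sigma_k = M_k + k(M_k - M_{k-1})$, where $M_K := \frac{1}{K+1}\sum_{j=0}^K \sigma_j$, to decompose
\[ f^* \leq \M_{smooth} f + \sup_{1 \leq k \leq N/2} k\,|M_k f - M_{k-1} f|, \]
and dominate the second term by the weighted square function
\[ Sf := \Bigl( \sum_{1 \leq k \leq N/2} k^{2}\, |M_k f - M_{k-1} f|^{2} \Bigr)^{1/2}. \]
It then suffices to prove dimension-free $L^p$ bounds on $Sf$. At $p=2$, Plancherel and the identity $\widehat{M_k}(S) - \widehat{M_{k-1}}(S) = \frac{1}{k(k+1)}\sum_{j=0}^{k-1}(\widehat{\sigma_k}(S) - \widehat{\sigma_j}(S))$ reduce matters to the uniform multiplier estimate $\sup_{S \in \I^N}\sum_k k^2|\widehat{M_k}(S) - \widehat{M_{k-1}}(S)|^2 \leq C$, which continues the Krawtchouk analysis of \cite[\S 2]{HKS}.

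To pass from $p=2$ to $1 < p < \infty$, I would randomize via independent Rademacher signs $\{\epsilon_k\}$: Khintchine's inequality gives $\|Sf\|_{L^p(\I^N)} \sim \|T_\epsilon f\|_{L^p(\I^N\times \Omega_\epsilon)}$, with $T_\epsilon f := \sum_k \epsilon_k\, k\,(M_k - M_{k-1})f$. Each randomized operator $T_\epsilon$ inherits the uniform $L^2$ bound from the square function estimate; a uniform weak-$(1,1)$ bound should follow by comparison with Hopf-Dunford-Schwartz maximal averages of the noise semigroup $\n_t$, exactly in the manner of \cite{S} and \cite{NS}. Marcinkiewicz interpolation and Fubini then close the argument. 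The hardest step will be the pointwise-in-$k$ Krawtchouk smoothness bound underlying the $L^2$ square function: \cite{HKS} use only a dyadic consequence, whereas here we need a full $\ell^2$ bound on first differences, uniformly in $S\in\I^N$ and in $N$. The delicate regime is $k \sim |S|$, where $\kappa_k^N$ oscillates most; pushing the spectral estimates of \cite[\S 2]{HKS} to this finer level of granularity is where the main technical work will lie.
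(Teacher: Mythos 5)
Your plan reproduces, at first order, the Stein-type comparison that \cite{HKS} used to get the $L^2$ bound, but it has two gaps, one fixable and one fatal to the stated strategy. The fixable one: the claimed uniform multiplier estimate $\sup_{S}\sum_k k^2|\widehat{M_k}(S)-\widehat{M_{k-1}}(S)|^2\leq C$ is false as written, because of the parity symmetry $\kappa_k^N(N-r)=(-1)^k\kappa_k^N(r)$. At $S=(1,\dots,1)$ one has $\widehat{\sigma_j}(S)=(-1)^j$, so $k|\widehat{M_k}(S)-\widehat{M_{k-1}}(S)|\approx 1$ and the sum over $k\leq N/2$ is of order $N$. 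This is exactly why the paper first reduces to even radii, replacing $f^*$ by $\sup_{k\leq N/4}|\sigma_{2k}*f|$ via the identity $\sigma_1*\sigma_k=\frac{k}{N}\sigma_{k-1}+\frac{N-k}{N}\sigma_{k+1}$ and the contractivity of $\sigma_1$; without some such parity reduction your $L^2$ square-function step already fails.

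The fatal gap is the passage to $1<p<2$. Your argument hinges on a dimension-free weak-type $(1,1)$ bound, uniform in the signs $\epsilon$, for the operators $T_\epsilon f=\sum_k \epsilon_k\,k(M_k-M_{k-1})f$, asserted to follow ``by comparison with Hopf--Dunford--Schwartz maximal averages of the noise semigroup.'' That comparison cannot work: Hopf--Dunford--Schwartz and Stein's semigroup maximal theorem apply to positive time-averages of $\n_t$, whereas $T_\epsilon$ is a signed multiplier operator which is not pointwise dominated by any such positive maximal average, and there is no dimension-free Calder\'on--Zygmund theory on $\I^N$ to substitute for it. Indeed the paper stresses (via Ornstein's example and the $\sqrt{N}$ lower bound of \S 4) that weak-type $(1,1)$ information is precisely what is unavailable in this setting, and the method of \cite{S}, \cite{NS} is designed to avoid it: one embeds $f^*=S_*^{-1}f$ in the analytic family of Ces\`aro means $S_n^\lambda$, proves dimension-free $L^2$ bounds for the maximal functions of large negative integer order $-m$ through the square functions $R_m$ of $m$-th differences $\triangle^m\sigma_{2k}$ (Proposition \ref{main}, the Krawtchouk work), gets $L^p$, $p>1$, bounds at positive order from the smooth maximal function (Proposition \ref{smooth} with Lemma \ref{1}), controls the growth in $\Ima\lambda$ (Lemmas \ref{2}, \ref{3}), and then applies Stein's analytic interpolation to the linearized family $S_r^\lambda$ at $\lambda=-1$; letting $m\to\infty$ and the positive-order exponent tend to $1$ captures every $p>1$ with no weak-$(1,1)$ input for the rough pieces. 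Your proposal contains no substitute for this interpolation-in-the-order-parameter step, so as it stands it proves (essentially) only the known range $p\geq 2$.
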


This leads directly to the following corollary (cf. \cite[\S 1.1]{HKS}). In what follows, $\epsilon_p$ will denote sufficiently small numbers, depending only on the $L^p \to L^p$ operator norm bound on $f^*$.

\begin{cor}
Let $p > 1$ be arbitrary. Suppose that $L \subset \I^N$ is a subset of the hypercube that has sufficiently small relative density $\epsilon_L \leq \epsilon_p$:
\[ \frac{|L|}{2^N} = \epsilon_L.\]
Then there exists some $z = z_L \in \I^N$ so that the for \emph{every} $0 \leq k \leq N$, the fraction of the $k$-sphere centered at $z_L$, $\{ x \in \I^N : |x-z_L| = k \}$, which intersects $L$ is no more than a constant multiple of $\epsilon_L^{1/p}$.
\end{cor}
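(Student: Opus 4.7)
The plan is to deduce the corollary directly from Theorem~\ref{big} applied to $f = 1_L$, via a standard distributional-inequality extraction of a good center $z$. First, I would unwind the notation: for any $z \in \I^N$,
\[ \sigma_k * 1_L(z) = \frac{1}{\binom{N}{k}} \#\{x \in L : |x-z| = k\}, \]
which is exactly the fraction of the $k$-sphere centered at $z$ that meets $L$. So the conclusion of the corollary is equivalent to finding $z \in \I^N$ with $f^{**}(z) \leq C \epsilon_L^{1/p}$, where $f^{**}(z) = \sup_{0 \leq k \leq N} \sigma_k * f(z)$ is the full (two-sided) maximal operator from the introduction.

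Next, I would bound $\|f^{**}\|_{L^p(\I^N)}$. The pointwise inequality $f^{**} \leq f^{*} + (\sigma_N * f)^{*}$ noted in the introduction, combined with the observation that convolution with $\sigma_N$ is simply translation by $(1,1,\ldots,1) \in \I^N$ and hence $L^p$-isometric (and commutes with the $f^*$ operation), reduces matters to two applications of Theorem~\ref{big}:
\[ \|f^{**}\|_{L^p(\I^N)} \leq 2 C_p \|1_L\|_{L^p(\I^N)} = 2 C_p |L|^{1/p}. \]

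Finally, I would apply Chebyshev's inequality. For any $\lambda > 0$,
\[ \#\{z \in \I^N : f^{**}(z) > \lambda \} \leq \lambda^{-p} \|f^{**}\|_{L^p(\I^N)}^p \leq (2 C_p)^p \lambda^{-p} |L|. \]
Taking $\lambda = C_p' \epsilon_L^{1/p}$ with, say, $C_p' := 4 C_p$, the right-hand side becomes $2^{-p}\cdot 2^N < 2^N$, producing some center $z = z_L$ with $f^{**}(z) \leq C_p' \epsilon_L^{1/p}$, which is the desired conclusion. The hypothesis $\epsilon_L \leq \epsilon_p := (C_p')^{-p}$ only serves to ensure $\lambda \leq 1$: outside this regime the conclusion is vacuous, since $\sigma_k * 1_L \leq 1$ pointwise automatically.

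There is essentially no real obstacle here beyond Theorem~\ref{big} itself; the corollary is a textbook consequence of an $L^p$-maximal inequality via Chebyshev. The only point requiring a moment's care is the passage from $f^{*}$ (which is what Theorem~\ref{big} controls directly) to the two-sided maximal function $f^{**}$ that is needed to handle all $0 \leq k \leq N$ in the statement of the corollary.
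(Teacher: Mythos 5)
Your proposal is correct and follows essentially the same route as the paper: apply Theorem~\ref{big} to $f = 1_L$ and extract a good center $z_L$ by averaging (the paper phrases your Chebyshev step as a pigeonhole over the mean of $|(1_L)^*|^p$). In fact you are slightly more careful than the paper's own two-line proof, which works only with $f^{*}$ (radii $k \leq N/2$) and silently asserts the conclusion for all $0 \leq k \leq N$; your explicit use of $f^{**} \leq f^{*} + (\sigma_N * f)^{*}$, with $\sigma_N$ acting as translation by the all-ones vector, fills that small gap.
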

\begin{proof}
One averages
\[ \frac{1}{2^N} \sum_{x \in \I^N} |(\mathbf{1}_L)^{**}|^p(x) \leq C_p^p \frac{|L|}{2^N} = C_p^p \epsilon_L.\]
By the pigeon-hole principle, there exists a $z_L \in \I^N$ so that 
\[ (\mathbf{1}_L)^{**}(z_L) \leq C_p \epsilon_L^{1/p},\]
which is non-vacuous for $\epsilon_L$ sufficiently small. In particular,
\[ \sigma_k*\mathbf{1}_L(z_L) = \frac{|\{x \in \I^N: |x-z_L| = k \}|}{|\{ x \in \I^N : |x| =k\}|} \leq C_p \epsilon_L^{1/p} \]
for each $k$, which is the desired conclusion.
\end{proof}

Unfortunately, our argument, which relies on semigroup techniques, breaks down at the endpoint $L^1 \to L^{1,\infty}$.
Indeed, as shown by Ornstein \cite{O}, it is not in general possible to convert $L^p$-semigroup estimates to weak-type $(1,1)$ bounds.
Away from the semigroup setting, the problem of converting $L^p, p>1$ bounds to weak-type $(1,1)$ estimates remains not just formally more difficult, but often impossible. A familiar example occurs in the Euclidean setting, where dimension-free estimates are proved for the Hardy-Littlewood maximal function for cubes inside $\RR^d$ \cite{B},
\[ \M_{cubes}f(x) := \sup_{r>0} \frac{1}{r^d} \int_{[-r/2,r/2]^d} |f(x-y)| \ dy,\]
in contrast to the fact that the weak-type $(1,1)$ norm of $\M_{cubes}$ must grow with dimension, see \cite{Al} and \cite{Au}. We refer the reader to \cite{NT} for further examples and discussion.

In the other direction, 
dimension-independent weak-type $(1,1)$ estimates have been established on the free-group in \cite{RT}, and later in \cite[\S 5]{NT};
both arguments were driven by the underlying geometry of the group at hand: a strong isoperimetric inequality, and uniqueness of geodesics, anchor the respective proofs. For a further discussion of the connections between group geometry and weak-type bounds, we refer the reader to \cite{NT}.

Given the dimension-independent weak-type $(1,1)$ boundedness of $\M_{smooth}$, one might hope that the spherical maximal function might itself satisfy similar dimension-independent bounds; to the extent that the geometry of the hypercube renders ineffective both of the above techniques, however, it is perhaps unsurprising that obtaining a dimension-independent weak-type estimate is not possible:

By testing against, $\delta = \mathbf{1}_{\{(0,\dots,0)\}}$, the point mass at the origin (\S 4), we show that the weak-type $(1,1)$ operator norm of the spherical maximal function, $C_1 = C_1(N)$, must grow at least like $\sqrt{N}$. In particular, there exists an absolute constant $c>0$ so that
\[ \sup_{ f \neq 0} \frac{\| f^{**} \|_{L^{1,\infty}(\I^N)}}{ \| f \|_{L^1(\I^N)}} \geq
\frac{\| \delta^{**} \|_{L^{1,\infty}(\I^N)}}{ \| \delta \|_{L^1(\I^N)}} \geq
c \sqrt{N}. \]
Here, the supremum is taken over all non-zero functions $f : \I^N \to \C$.

\begin{remark}
As is shown in \cite[\S 4]{HKS}, the maximal function associated to the above-mentioned noise semigroup, $\n_*f := \sup_t | \n_t f|$,   pointwise dominates the dampened maximal function
\[ \frac{1}{\sqrt{N}} f^*. \]
Thus, the problem of determining whether the $\sqrt{N}$ growth of the weak-type $(1,1)$ operator norm is sharp could reduce to determining whether $\n_*$ is bounded independent of dimension; this seems like a very challenging problem, as in general, such semi-group maximal functions are not bounded at the $L^1 \to L^{1,\infty}$ endpoint (cf.\ \cite{O}).
We look forward to pursuing this line of inquiry in further research.
\end{remark}

\subsection{Acknowledgements}
The author would like to thank Michael Lacey for bringing this problem to his attention, Alexander Bufetov for helpful conversations,
Igor Pak and Ryan O'Donnell for their encouragement, and his advisor, Terence Tao, for his continued support. Finally, the author wishes to thank the anonymous referee for his great input in correcting a significant error in a previous draft, and for all his effort in helping to improve this paper.

\subsection{Notation}
We will make use of the modified Vinogradov notation. We use $X \lesssim Y$, or $Y \gtrsim X$ to denote the estimate $X \leq CY$ for an absolute constant $C$. If we need $C$ to depend on a parameter, we shall indicate this by subscripts, thus for instance $X \lesssim_m Y$ denotes the estimate $X \leq C_m Y$ for some $C_m$ depending on $m$. We use $X \approx Y$ as shorthand for $X \lesssim Y \lesssim X$, and similarly for $X \approx_m Y$.

\section{Proof of Theorem \ref{big}}
We break the argument into subsections. Throughout, all parameters $k,r, \dots$ will be non-negative.

\subsection{Initial Reductions}
We begin by defining the auxiliary maximal function,
\[ \M f(x):= \sup_{2k \leq M} |\sigma_{2k} f|(x) 
\]
where $M:= \lceil N/2 \rceil$ is the least integer greater than or equal to $N/2$.

Up to the identity
\begin{equation}\label{e:o2e}
\sigma_1 * \sigma_k = \frac{k}{N} \sigma_{k-1} + \frac{N-k}{N} \sigma_{k+1},
\end{equation}
which we will establish below, it will suffice to study the operator $\M$. First, using \eqref{e:o2e}, we obtain the pointwise bound:
\[ \aligned
&\sigma_1 * \sigma_{2n-2} + \sigma_1 * \sigma_{2n} \\
& \qquad \qquad \qquad = \frac{2n-2}{N} \sigma_{2n-3}+
\frac{N+2}{N} \sigma_{2n-1} +
\frac{N-2n}{N} \sigma_{2n+1} \\
& \qquad \qquad \qquad \geq \sigma_{2n-1}. \endaligned \]
But now, for any odd radius, $2n-1$, we simply majorize
\[ \sigma_{2n-1}*f \leq \sigma_{2n-2} *( \sigma_1*f) + \sigma_{2n} * (\sigma_1*f) \leq 2\M(\sigma_1*f),\]
and take into account \eqref{e:contr}.

To establish \eqref{e:o2e} we compute:
\[ \aligned
\sigma_1 * \sigma_k(x) &= \sum_{y \in \mathbb{I}^N} \sigma_k(x-y) \sigma_1(y) \\
&= \frac{1}{\binom{N}{k}} \cdot \frac{1}{N} |\{|y| = 1 : |x-y| = k\}| \\
&= \frac{1}{\binom{N}{k}} \cdot \frac{1}{N} \left((N-k+1) \cdot \mathbf{1}_{\{ |x|=k-1 \} } + (k+1) \cdot \mathbf{1}_{ \{ |x| = k+1 \} }  \right) \\
&= \frac{k}{N} \sigma_{k-1} + \frac{N-k}{N} \sigma_{k+1},
\endaligned \]
where in the second last line, we used that if $|x|=k-1$, there are exactly
\[ N - (k-1) = N-k+1\]
possible points $|y|=1$ so that
\[ |x-y| = k,\]
and similarly there are $k+1$ points on $\{|y|=1\}$ so that if $|x| = k+1$, $|x-y| = k$.

\medskip

We now turn to:
\subsection{Krawtchouk Polynomials and Spectral Preliminaries}
Following the lead and notation of \cite[\S 2.2]{NS}, for a sequence of numbers $\{ u_k\}_{k=0}^\infty$, we define the discrete differentiation operators,
\[ \aligned
\triangle^0 u_k &:= u_k \\
\triangle^1 u_k &:= u_k - u_{k-1}, \ \triangle u_0 := u_0\\
&\ \vdots \\
\triangle^m u_k &:= \triangle (\triangle^{m-1} u_k) =
\sum_{j=0}^m (-1)^j \binom{m}{j} u_{k-j}, \\
&\ \vdots \endaligned \]
We will let these operators act on the sequence of functions 
\[ \{ \sigma_{2k}(x), \ k : 0 \leq 2k \leq M \},\] so that e.g.\
\begin{equation}\label{e:tri}
\triangle^m \sigma_{2k}(x) = \sum_{j=0}^m (-1)^j\binom{m}{j} \sigma_{2(k-j)}(x).
\end{equation}

We will need to consider the associated (radial) multipliers
\[ \F (\triangle^j \sigma_{2k}) (S);\]
again, we 
we shall follow the lead of \cite[\S 3.2]{HKS}.

With $|S| = r$, we have
\begin{equation}\label{e:mult} \F \sigma_{2k}(S) = \sum_{j=0}^{2k} (-1)^j \frac{ \binom{r}{j} \cdot \binom{N-r}{2k-j} }{ \binom{N}{ 2k} }.
\end{equation}
Indeed, with 
\[ E^N_r:=  (\underbrace{1, \dots, 1}_{\text{$r$ ones}}, \underbrace{0 \dots 0}_{\text{$N-r$ zeros}}) \]
denoting the element with $r$ ``1''s followed by $N-r$ ``0''s, using the radiality of $\F \sigma_{2k}$ we compute
\[ \aligned
\F \sigma_{2k} (S) &= \F \sigma_{2k} (E^N_r)\\
&= \frac{1}{\binom{N}{2k}} \sum_{|y| = 2k} (-1)^{y \cdot E^N_r} \\
&= \frac{1}{\binom{N}{2k}} \sum_{j=0}^{2k} \sum_{|y| = 2k, y \cdot E^N_r = j} (-1)^{y \cdot E^N_r}\\
&= \frac{1}{\binom{N}{2k}} \sum_{j=0}^{2k} (-1)^j \binom{r}{j} \cdot \binom{N-r}{2k-j}. \endaligned\]
The expression on the right is the normalized $(2k)$th Krawtchouk Polynomial, $\kappa_{2k}^{N}(r)$. In particular, as remarked in \cite[p.8]{HKS},
\[ \binom{N}{k} \kappa_k^N(r) \]
counts the $k$-element subsets of $\{1,\dots,N\}$ according to the parity of intersection with the set $\{1,\dots,r\}$.

We collect the properties of these multipliers relevant to our analysis in the following lemma. For a fuller discussion of Krawtchouk Polynomials, we refer the reader to \cite{K} or to \cite{L}.

\begin{lemma}\label{KR}
The Krawtchouk polynomials $\kappa_k^N(r)$ satisfy the symmetries
\[ \kappa_k^N(r) = \kappa_r^N(k) = (-1)^k \cdot \kappa_k^N(N-r). \]
Moreover, there exists some $c > 0$ so that for all $0 \leq k,r \leq M$, we have the bound
\[ |\kappa_k^N (r)| \leq e^{-c \cdot \frac{kr}{N} }. \]
\end{lemma}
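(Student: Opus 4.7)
Both symmetries follow directly from the Fourier identity
\[
\binom{N}{k}\kappa_k^N(r)=\sum_{|y|=k}(-1)^{y\cdot E^N_r}
\]
established in the derivation preceding the lemma, once one observes (implicit there) that the right-hand side depends on $E^N_r$ only through its Hamming weight $r$. For $\kappa_k^N(r)=\kappa_r^N(k)$, I would sum the identity over all $w\in\I^N$ with $|w|=r$ and exchange the order of summation to obtain
\[
\binom{N}{r}\binom{N}{k}\kappa_k^N(r)=\sum_{|y|=k,\,|w|=r}(-1)^{y\cdot w},
\]
which is manifestly symmetric in $k\leftrightarrow r$. For $\kappa_k^N(r)=(-1)^k\kappa_k^N(N-r)$, I would apply the same radiality to the componentwise complement $\overline{E^N_r}$ (a vector of weight $N-r$) and use the integer identity $y\cdot\overline{E^N_r}=|y|-y\cdot E^N_r=k-y\cdot E^N_r$ valid on $|y|=k$ to extract the sign $(-1)^k$.

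For the exponential decay, the plan is to use the generating function
\[
\sum_{k=0}^N\binom{N}{k}\kappa_k^N(r)z^k=(1-z)^r(1+z)^{N-r}
\]
(a rearrangement of the sum already computed) together with Cauchy's integral formula on $|z|=\rho\in(0,1)$:
\[
\binom{N}{k}|\kappa_k^N(r)|\le \rho^{-k}\max_{|z|=\rho}\bigl|(1-z)^r(1+z)^{N-r}\bigr|.
\]
By the first symmetry we may assume $k\le r\le N/2$. A short Lagrange-multiplier computation shows that, for $\rho$ below an explicit threshold depending on $(N,r)$, the maximum on $|z|=\rho$ is realized at $z=\rho$ with value $(1-\rho)^r(1+\rho)^{N-r}$. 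Choosing $\rho$ to be the Chernoff saddle, i.e.\ the smaller root in $(0,1)$ of $(N-k)\rho^2-(N-2r)\rho+k=0$, and combining with Stirling's formula for $\binom{N}{k}$ produces the desired decay $|\kappa_k^N(r)|\le e^{-ckr/N}$.

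The main obstacle is the corner regime $k,r\to N/2$, where the Chernoff discriminant $(N-2r)^2-4k(N-k)$ turns negative, no real saddle exists, and the maximum on $|z|=\rho$ migrates to an interior critical point at $\cos\theta=(N-2r)(1+\rho^2)/(2N\rho)$. The cleanest workaround here is to invoke the Plancherel identity $\sum_r\binom{N}{r}\kappa_k^N(r)^2=2^N/\binom{N}{k}$ (the Fourier-side of $\|\sigma_k\|_{L^2}^2=1/\binom{N}{k}$), which yields the pointwise bound $|\kappa_k^N(r)|\le\sqrt{2^N/\bigl(\binom{N}{k}\binom{N}{r}\bigr)}$; Stirling then converts the right-hand side into an exponential of the form $e^{-ckr/N}$ precisely in the corner where the Chernoff argument breaks down. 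Alternatively, the lemma is a classical fact about Krawtchouk polynomials and can be imported verbatim from \cite{K} or \cite{L}.
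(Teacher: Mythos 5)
The paper's own proof of this lemma is essentially a citation: the symmetries are declared immediate and the decay bound is imported from \cite{HKS}, Lemma 7. Your closing remark (``import the bound'') is therefore the paper's actual route, except that the correct source is \cite[Lemma 7]{HKS}; the estimate $|\kappa_k^N(r)|\le e^{-ckr/N}$ is not stated verbatim in \cite{K} or \cite{L}, which the paper cites only for general background on Krawtchouk polynomials. Your symmetry arguments are correct and complete: the double sum $\sum_{|y|=k,|w|=r}(-1)^{y\cdot w}$ gives $\kappa_k^N(r)=\kappa_r^N(k)$, and the complementation identity $y\cdot\overline{E^N_r}=k-y\cdot E^N_r$ on $\{|y|=k\}$ gives the sign symmetry; this is more than the paper records, and it is fine.

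Your self-contained argument for the decay bound, however, has concrete gaps as written. First, the breakdown region of the saddle-point bound is mischaracterized: the real root of $(N-k)\rho^2-(N-2r)\rho+k=0$ fails to exist whenever $(N-2r)^2<4k(N-k)$, i.e.\ whenever $r\ge N/2-\sqrt{k(N-k)}$, which for every fixed $k\ge 1$ (not only $k\approx N/2$) includes all $r$ sufficiently close to $N/2$. So the Plancherel fallback $|\kappa_k^N(r)|\le 2^{N/2}\bigl(\binom{N}{k}\binom{N}{r}\bigr)^{-1/2}$ must be shown to produce $e^{-ckr/N}$ on this entire oscillatory strip, and that is only true because there one also has the lower bound $r\ge N/2-\sqrt{k(N-k)}$, hence $\binom{N}{r}\gtrsim 2^N N^{-1/2}e^{-2k}$; without inserting this, the crude bound is far too weak for small $k$. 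Second, both quantitative conversions you defer to ``Stirling'' --- comparing $\rho_0^{-k}(1-\rho_0)^r(1+\rho_0)^{N-r}/\binom{N}{k}$ with $e^{-ckr/N}$ in the non-oscillatory regime, and the entropy computation in the oscillatory strip --- are exactly where all the work lies, and neither is carried out; the claim that the circle maximum sits at $z=\rho$ is correct at the smaller Chernoff root (one checks $\rho_0^2\le k/(N-k)$, which is the needed threshold), but that too is asserted rather than verified. In short: the strategy is viable and genuinely different from the paper's (which simply quotes \cite[Lemma 7]{HKS}), but as a proof it is an outline whose decisive estimates are missing; as a citation it points to the wrong references.
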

\begin{proof}
The symmetries are immediate upon inspection, and the quantitative bound appears as \cite[Lemma 7]{HKS}.
\end{proof}

\begin{lemma}\label{diff}
For any $r,l\leq N$,
\[ \aligned
\kappa_r^N(l) + \kappa_r^N(l-1) &= 2 \frac{N-r}{N} \cdot \kappa_r^{N-1}(l-1) \ \text{ and} \\
\kappa_r^N(l) - \kappa_r^N(l-1) &= -2 \frac{r}{N} \cdot \kappa_{r-1}^{N-1}(l-1). \endaligned \]
Consequently,
\[ \kappa_r^N(l) - \kappa_r^N(l-2) = -4 \frac{r(N-r)}{N(N-1)} \cdot \kappa_{r-1}^{N-2}(l-2) = -4 \frac{\binom{N-2}{r-1}}{\binom{N}{r}} \cdot \kappa_{r-1}^{N-2}(l-2).\]
\end{lemma}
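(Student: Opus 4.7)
The plan is to derive the two first-order identities from the generating-function encoding
\[ \sum_{r=0}^{N} \binom{N}{r} \kappa_r^N(l)\, z^r = (1-z)^l (1+z)^{N-l}, \]
which is immediate from the explicit formula $\binom{N}{r}\kappa_r^N(l) = \sum_{j=0}^{r} (-1)^j \binom{l}{j}\binom{N-l}{r-j}$ recorded earlier in the excerpt, together with the standard Vandermonde-type convolution of coefficients. I would establish this once and then read off everything else as a coefficient extraction.

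For the sum identity, factoring $(1-z)^{l-1}(1+z)^{N-l}$ out of the combination $(1-z)^l(1+z)^{N-l} + (1-z)^{l-1}(1+z)^{N-l+1}$ leaves the factor $(1-z)+(1+z) = 2$, so
\[ (1-z)^l(1+z)^{N-l} + (1-z)^{l-1}(1+z)^{N-l+1} = 2\,(1-z)^{l-1}(1+z)^{(N-1)-(l-1)}. \]
Extracting the coefficient of $z^r$ yields $\binom{N}{r}\bigl[\kappa_r^N(l)+\kappa_r^N(l-1)\bigr] = 2\binom{N-1}{r}\kappa_r^{N-1}(l-1)$, and the claim follows on dividing by $\binom{N}{r}$ using $\binom{N-1}{r}/\binom{N}{r} = (N-r)/N$. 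The same factorization with opposite sign produces $(1-z)-(1+z) = -2z$, so that
\[ (1-z)^l(1+z)^{N-l} - (1-z)^{l-1}(1+z)^{N-l+1} = -2z\,(1-z)^{l-1}(1+z)^{(N-1)-(l-1)}; \]
the extra factor of $z$ shifts the coefficient extraction from $z^r$ to $z^{r-1}$, giving $\binom{N}{r}\bigl[\kappa_r^N(l)-\kappa_r^N(l-1)\bigr] = -2\binom{N-1}{r-1}\kappa_{r-1}^{N-1}(l-1)$, and dividing by $\binom{N}{r}$ using $\binom{N-1}{r-1}/\binom{N}{r} = r/N$ produces the second identity.

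For the third identity I would telescope,
\[ \kappa_r^N(l)-\kappa_r^N(l-2) = \bigl[\kappa_r^N(l)-\kappa_r^N(l-1)\bigr] + \bigl[\kappa_r^N(l-1)-\kappa_r^N(l-2)\bigr], \]
apply the difference identity to each bracket to obtain $-\tfrac{2r}{N}\bigl[\kappa_{r-1}^{N-1}(l-1)+\kappa_{r-1}^{N-1}(l-2)\bigr]$, and then apply the sum identity with parameters $(N,r,l)\mapsto (N-1,r-1,l-1)$ to collapse the remaining bracket into $2\tfrac{N-r}{N-1}\kappa_{r-1}^{N-2}(l-2)$. This produces the coefficient $-4\tfrac{r(N-r)}{N(N-1)}$, and the alternate binomial form agrees because $\binom{N-2}{r-1}/\binom{N}{r} = r(N-r)/(N(N-1))$.

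The whole argument is mechanical, with no serious obstacle: the only care needed is bookkeeping for the parameter shifts $N\to N-1$, $l\to l-1$, $r\to r-1$ when chaining the sum and difference identities to produce the final formula.
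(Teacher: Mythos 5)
Your proof is correct, but it follows a different route from the paper's. The paper works directly with the character-sum definition $\kappa_r^N(l) = \sum_{x}\sigma_r(x)(-1)^{x\cdot E_l^N}$ and conditions on the value of the coordinate $x(l)$ (equivalently, on whether the $r$-element subset contains the element $l$): adding the two character sums kills the terms with $x(l)=1$ and leaves twice a sum over spheres in $\I^{N-1}$, giving the sum identity after renormalizing by $\binom{N-1}{r}/\binom{N}{r}=(N-r)/N$; the difference identity is stated to follow by the analogous computation (and is in any case cited to \cite{GKKS}), and the second-order identity is obtained by composing the two first-order ones, just as you do. You instead package the explicit formula $\binom{N}{r}\kappa_r^N(l)=\sum_j(-1)^j\binom{l}{j}\binom{N-l}{r-j}$ into the generating function $\sum_r \binom{N}{r}\kappa_r^N(l)z^r=(1-z)^l(1+z)^{N-l}$ and read both identities off the factorizations $(1-z)+(1+z)=2$ and $(1-z)-(1+z)=-2z$, the extra $z$ accounting for the shift $r\to r-1$; your coefficient bookkeeping, the binomial ratios $(N-r)/N$, $r/N$, $r(N-r)/(N(N-1))$, and the telescoping
\[
\kappa_r^N(l)-\kappa_r^N(l-2)=\bigl[\kappa_r^N(l)-\kappa_r^N(l-1)\bigr]+\bigl[\kappa_r^N(l-1)-\kappa_r^N(l-2)\bigr]
\]
followed by the sum identity at parameters $(N-1,r-1,l-1)$ are all accurate. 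The generating-function argument is more uniform — both identities and indeed all higher-order analogues fall out of one algebraic factorization, with no case-by-case combinatorial reasoning — while the paper's conditioning argument stays closer to the probabilistic meaning of $\sigma_r$ on the cube and explains the appearance of the lower-dimensional Krawtchouk polynomial as an honest average over $\I^{N-1}$. Either is a complete proof; the only (shared) caveat is the implicit convention needed at the boundary values $r=0$ or small $l$, where terms like $\kappa_{r-1}^{N-1}$ carry a vanishing prefactor.
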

\begin{proof}
This follows from conditioning on whether $r$-element subsets contain the element $l$ \cite[\S 3]{HKS}.

We provide details in the case of addition; the subtraction follows a similar line of reasoning, and at any rate appears as \cite[Lemma 3.2]{GKKS}.

One computes
\[ \aligned
\kappa_r^N(l) + \kappa_r^N(l-1) &=
\sum_{x \in \I^N} \sigma_r(x) (-1)^{x \cdot E_l^N} + \sum_{x \in \I^N} \sigma_r(x) (-1)^{x \cdot E_{l-1}^N} \\
&= \frac{1}{\binom{N}{r}} \sum_{x \in \I^N : |x| = r} (-1)^{x(1) + \dots + x(l-1)} \cdot ( (-1)^{x(l)} + 1 ) \\
&= \frac{2}{\binom{N}{r}} \sum_{x \in \I^N : |x| = r, x(l) = 0} (-1)^{x(1) + \dots + x(l-1)}. \endaligned\]
But, this sum can be expressed as
\[ \aligned
\frac{2}{\binom{N}{r}} \sum_{y \in \I^{N-1} : |y| = r} (-1)^{y(1) + \dots + y(l-1)} &=
\frac{2\binom{N-1}{r}}{\binom{N}{r}} \frac{1}{\binom{N-1}{r}} \sum_{y \in \I^{N-1} : |y| = r} (-1)^{y \cdot E_{l-1}^{N-1}} \\
&= 2 \frac{N-r}{N} \cdot \kappa_r^{N-1}(l-1), \endaligned\]
as desired.
\end{proof}

It will be useful to \emph{define}
\[ \aligned 
\partial^0 \kappa_r^N(l) &:= \kappa_r^N(l), \\
\partial \kappa_r^N(l) &:= \partial^1 \kappa_r^N(l) := \kappa_r^N(l) - \kappa_r^N(l-2) \\
&\ \vdots \\
\partial^m \kappa_r^N(l) &:= \partial ( \partial^{m-1} \kappa_r^N(l))\\
&\ \vdots
\endaligned \]
provided $m \leq \min\{ r, l/2\}$. Otherwise we set $\partial^m \kappa_r^N(l) :=0$.

By repeated applications of Lemma \ref{diff}, we see that
\[ \partial^m \kappa_r^N(l) = (-4)^m \frac{\binom{N-2m}{r-m}}{\binom{N}{r}} \kappa_{r-m}^{N-2m}(l-2m),\]
where $\binom{N-2m}{r-m} := 0$ if $r-m > N-2m$.

Now, using the symmetry $\kappa^N_k(r) = \kappa^N_r(k)$, we find that for $2k \leq M$, and any frequency $|S| = r$,
\[ \F \left( \triangle^m \sigma_{2k} \right) (S) = \partial^m \kappa_r^N(2k) = (-4)^m \frac{\binom{N-2m}{r-m}}{\binom{N}{r}} \kappa_{r-m}^{N-2m}(2(k-m)) \]
provided $m \leq \min\{ r, k\}$.
For future reference, we remark that for $r \leq M$, the quantitative estimate from Lemma \ref{KR} is effective,
\begin{equation}\label{e:smallr}
|\kappa_{r-m}^{N-2m}(2(k-m))| \leq e^{-c \frac{(r-m)(2k -2m)}{N-2m}}
\end{equation}
for an appropriate constant $c$, since in this case 
\[ r-m, \ 2k-2m \leq \frac{N-2m}{2};\] for $r > M$, we twice use the symmetry of Lemma \ref{KR} to bound
\begin{equation}\label{e:biggr}
|\kappa_{r-m}^{N-2m}(2(k-m))| = 
|\kappa_{N-r-m}^{N-2m}(2(k-m))|
\leq e^{-c \frac{(N-r-m)(2k -2m)}{N-2m}}
\end{equation}


\subsection{A Review of Nevo-Stein}
In this subsection, we will review the comparison argument of \cite{S} as it relates to our current setting. For a fuller treatment, we refer the reader to \cite{NS}.

Since the convolution operators with kernels $\{ \sigma_{2k} \}$ are self-adjoint, positive, norm-one $L^1$- and $L^\infty$-contractions \eqref{e:contr}, 
we may use the following outline from \cite{S}, \cite{NS}:

With $\alpha, \beta \in \RR$, $\lambda = \alpha + i \beta \in \C$, we recall the complex binomial coefficients
\begin{equation}\label{e:binom} A^\lambda_n := \frac{ (\lambda + 1)(\lambda + 2)\cdot \ldots \cdot (\lambda + n)}{n!}, \ A_0^\lambda:= 1, A_{-1}^\lambda:=0.
\end{equation}
We define the Cesaro means
\begin{equation}\label{e:ces} S_n^\lambda f(x) := \sum_{k=0}^{n} A_{n-k}^\lambda \sigma_{2k}* f(x), \ \lambda \in \C,
\end{equation}
for $n \leq M/2$
and remark that in the special case that $\lambda= -m-1$ is a negative integer, we have
\begin{equation}\label{e:goodces}
S_n^{-m-1}f(x) = \sum_{k=0}^{n} \triangle^m \sigma_{2k}* f(x)
\end{equation}
\cite[p. 143]{NS}. In particular, when $m > M/2$, $S_n^{-m-1} f \equiv 0$. For future reference, we also observe that for all $\lambda \in \mathbb{C}$,
\begin{equation}\label{e:id}
S_0^\lambda f = A_0^\lambda \sigma_0*f = \sigma_0*f = f.
\end{equation}

The maximal functions associated to these higher Cesaro means are
\begin{equation}\label{e:max} S_*^\lambda f(x):= \max_{0 \leq n \leq N/4} \left| \frac{S_n^\lambda f(x)}{(n+1)^{\lambda +1}} \right|. \end{equation}

The following lemmas are finitary adaptations of the results in \cite{NS}; we emphasize that the formal nature of the arguments in \cite{NS} allows them to be applied in much greater generality than our current setting.

\medskip

\begin{lemma}[cf. \cite{NS}, Proof of Lemma 4, p. 145]\label{1}
For $\alpha > 0, \beta \in \RR$, there exist positive constants $C_\alpha$ so that
\[ S_*^{\alpha+i\beta} f \leq C_\alpha e^{2 \beta^2} S_*^0|f| \]
holds pointwise.
\end{lemma}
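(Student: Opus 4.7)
The plan is to follow the Nevo--Stein reduction, which compares the $\lambda$-th Cesaro maximal function to the ordinary ($\lambda=0$) Cesaro maximal function by summation by parts against the complex binomial coefficients.

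First, I would establish the summation-by-parts identity
\[ S_n^\lambda f(x) = \sum_{k=0}^n A_{n-k}^{\lambda-1}\, S_k^0 f(x). \]
Writing $\sigma_{2k}*f = S_k^0 f - S_{k-1}^0 f$ (with $S_{-1}^0 f := 0$) and collecting terms, this reduces to the elementary identity $A_m^\lambda - A_{m-1}^\lambda = A_m^{\lambda-1}$, which is an immediate consequence of the generating-function relation $(1-z)^{-\lambda} = (1-z)\cdot(1-z)^{-\lambda-1}$.

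Second, I would prove the uniform bound
\[ \bigl|A_n^{\lambda-1}\bigr| \leq C_\alpha e^{2\beta^2}(n+1)^{\alpha-1}, \qquad \lambda = \alpha + i\beta,\ \alpha > 0. \]
Writing $A_n^{\lambda-1} = \Gamma(n+\lambda)/\bigl(\Gamma(n+1)\Gamma(\lambda)\bigr)$, Stirling's formula on horizontal translates yields $\bigl|\Gamma(n+\lambda)/\Gamma(n+1)\bigr| \lesssim_\alpha (n+1)^{\alpha-1}$ uniformly in $\beta$. The standard vertical-line asymptotic $|\Gamma(\alpha+i\beta)| \sim \sqrt{2\pi}|\beta|^{\alpha-1/2}e^{-\pi|\beta|/2}$ as $|\beta|\to\infty$ then gives $|1/\Gamma(\lambda)| \lesssim_\alpha |\beta|^{1/2-\alpha}e^{\pi|\beta|/2}$, and the crude majorization $\pi|\beta|/2 \leq 2\beta^2 + O_\alpha(1)$ absorbs this into the stated $e^{2\beta^2}$ factor.

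Finally, combining the identity with the trivial pointwise bound $|S_k^0 f(x)| \leq S_k^0|f|(x) \leq (k+1)\, S_*^0|f|(x)$ and dividing by $(n+1)^{\lambda+1}$ (whose modulus is $(n+1)^{\alpha+1}$), I obtain
\[ \left|\frac{S_n^\lambda f(x)}{(n+1)^{\lambda+1}}\right| \leq C_\alpha e^{2\beta^2}\, S_*^0|f|(x)\, \cdot\, \frac{1}{(n+1)^{\alpha+1}}\sum_{k=0}^n (k+1)(n-k+1)^{\alpha-1}. \]
For every $\alpha>0$ a Beta-function-type estimate shows the last sum is $\lesssim_\alpha (n+1)^{\alpha+1}$, and taking the maximum over $n\leq N/4$ completes the argument. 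The main technical obstacle is the second step: producing a clean, uniform-in-$\beta$ bound on the complex binomial coefficients on vertical strips. Once that Gamma-function estimate is in hand, the first and third steps are essentially formal bookkeeping.
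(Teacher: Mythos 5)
Your proof is correct and takes essentially the same route as the paper, which simply cites the proof of Lemma 4 in Nevo--Stein rather than reproducing it: the convolution identity $S_n^\lambda f=\sum_{k=0}^n A_{n-k}^{\lambda-1}S_k^0 f$, a uniform bound $|A_n^{\alpha-1+i\beta}|\lesssim_\alpha e^{2\beta^2}(n+1)^{\alpha-1}$, the trivial estimate $|S_k^0 f|\le (k+1)S_*^0|f|$, and the sum bound $\sum_{k=0}^n(k+1)(n-k+1)^{\alpha-1}\lesssim_\alpha (n+1)^{\alpha+1}$. Your Gamma-function derivation of the coefficient bound (in place of citing Nevo--Stein's Lemma 3) and the direct Beta-type sum estimate (in place of the Ces\`aro-number convolution identity) are only minor implementation differences, and both steps are sound.
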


\medskip

\begin{lemma}[cf. \cite{NS}, Proof of Lemma 5, pp. 145-146]\label{2}
For each nonpositive integer $-m \leq 0$, and each real $\beta$, there exist positive constants $C_m$ so that
\[ S^{-m+i \beta}_*f \leq C_m e^{3 \beta^2} \left( S_*^{-m-1}f + S_*^{-m}f + \dots + S_*^{-1} f \right)\]
holds pointwise.
\end{lemma}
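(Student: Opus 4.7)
The plan is to adapt the Nevo--Stein argument pointwise. The basic ingredients are the Vandermonde-type convolution identity
$$S_n^{\lambda}f \;=\; \sum_{k=0}^n A_{n-k}^{\lambda-\mu-1}\, S_k^\mu f,$$
valid for arbitrary complex $\lambda,\mu$ (a direct consequence of $\sum_{j=0}^{n} A_{n-j}^\alpha A_j^\beta = A_n^{\alpha+\beta+1}$); the difference recursion $A_l^\lambda - A_{l-1}^\lambda = A_l^{\lambda-1}$; and the Gamma-function asymptotics
$$|A_j^{i\beta-r}| \;\lesssim\; \frac{e^{3\beta^2}}{(j+1)^r}, \qquad r=0,1,\ldots,m,$$
uniform in $j$, which follow from $|\Gamma(1+i\beta)|^{-1}\lesssim e^{\pi|\beta|/2}$ together with the power-law asymptotic $A_n^\lambda\sim n^\lambda/\Gamma(\lambda+1)$. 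The absolute constant $3$ is chosen to dominate $\pi/2$ and absorb polynomial-in-$\beta$ losses from Stirling.

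Pivoting at $\mu=-m-1$ so that $\lambda-\mu-1 = i\beta$, I would write
$$S_n^{-m+i\beta}f \;=\; \underbrace{\sum_{k<\lceil n/2\rceil} A_{n-k}^{i\beta}\, S_k^{-m-1}f}_{\text{head}} \;+\; \underbrace{\sum_{k\geq\lceil n/2\rceil} A_{n-k}^{i\beta}\, S_k^{-m-1}f}_{\text{tail}}.$$
The tail is handled directly: on its range $(k+1)\gtrsim(n+1)$, so $|S_k^{-m-1}f|\leq (k+1)^{-m}S_*^{-m-1}f\lesssim (n+1)^{-m}S_*^{-m-1}f$, and combined with $|A_{n-k}^{i\beta}|\lesssim e^{3\beta^2}$ and $O(n+1)$ terms, the tail contributes $\lesssim e^{3\beta^2}(n+1)^{1-m}S_*^{-m-1}f$, exactly the correct normalization.

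The head is more delicate: the naive estimate involves $\sum_{k<n/2}(k+1)^{-m}$, which is logarithmic (or worse after dividing by $(n+1)^{1-m}$) for $m\geq 1$, so one must exploit the telescoping structure of $A_{n-k}^{i\beta}$. I would apply Abel summation $m$ times in succession, using $S_k^{-(j+1)} = S_k^{-j}-S_{k-1}^{-j}$ at each step; the identity $A_l^{\lambda}-A_{l-1}^{\lambda} = A_l^{\lambda-1}$ transforms the body coefficient from $A_{n-k}^{i\beta-(j-1)}$ to $A_{n-k}^{i\beta-j}$, while producing at each step $j=1,\ldots,m$ a boundary term involving $S_{\lceil n/2\rceil-j}^{-(m+1-j)}$ scaled by $A^{i\beta-(j-1)}_{\lceil n/2\rceil+j}$. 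Using the asymptotic above with $r = j-1$, each boundary is $\lesssim e^{3\beta^2}(n+1)^{1-m}S_*^{-(m+1-j)}f$, contributing the ``right'' amount $e^{3\beta^2}S_*^{-(m+1-j)}f$ to the target bound. After $m$ iterations the residual body sum has coefficients $|A_{n-k}^{i\beta-m}|\lesssim e^{3\beta^2}(n-k)^{-m}$ and involves $|S_k^{-1}|\leq S_*^{-1}f$; since $n-k\gtrsim n+1$ on the head, $\sum_{k<n/2}(n-k)^{-m}\lesssim (n+1)^{1-m}$ (or better), so the residue contributes $\lesssim e^{3\beta^2}S_*^{-1}f$ after normalization. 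Summing tail, $m$ boundary terms, and residue yields the claimed pointwise bound; taking the max over $n$ gives the lemma.

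The main obstacle is the precise bookkeeping in the head: the improvement in decay of $|A_{n-k}^{i\beta-j}|$ from each Abel summation must exactly match the worsening scaling of the boundary operator $S_\cdot^{-(m+1-j)}$ (which grows like $(n+1)^{j-m}$), and this delicate balance must terminate at precisely $j=m$, where the residue first becomes summable on the head range. Verifying that the factor $e^{3\beta^2}$ does not inflate (to, e.g., $e^{Cm\beta^2}$) through these $m$ iterations is the main technical point, but is handled by the uniform-in-$j$ form of the Gamma asymptotic above.
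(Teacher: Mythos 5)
Your proposal is correct and takes essentially the same route as the paper's appendix proof of Lemma \ref{2}: the same convolution identity and coefficient bounds (Lemmas \ref{A} and \ref{B}), a head/tail split at roughly $n/2$, iterated summation by parts via $A_l^{\lambda}-A_{l-1}^{\lambda}=A_l^{\lambda-1}$ (the paper's identity $(**)$) producing boundary terms of size $e^{3\beta^2}(n+1)^{1-m}S_*^{-(m+1-j)}f$, and a residual sum with coefficients $A_{n-k}^{-m+i\beta}$ controlled by $S_*^{-1}f$. The only cosmetic difference is that the paper disposes of small radii $n\lesssim m$ by a separate crude estimate (its ``Regime One''), a case your iteration should either handle the same way or note is trivial since constants may depend on $m$.
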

Although the conclusion of this lemma holds, the proof offered in Nevo-Stein contains a small gap, due to an incorrect application of summation by parts \cite[first paragraph, p. 146]{NS}. We therefore provide a full proof -- which still follows the reasoning of \cite{NS} -- in the below appendix.

\medskip

\begin{lemma}[cf. \cite{NS}, Proof of Lemma 5, p. 147]\label{3}
Define
\[ R_m f(x)^2 := \sum_{0 \leq k \leq N/4} (k+1)^{2m-1}|S_k^{-m-1}f(x)|^2. \]
Then there exists a positive constant $c_{-m}$ so that
\[ S_*^{-m} f \leq c_{-m} R_mf  + 2S_*^{-(m-1)}f\]
holds pointwise.
\end{lemma}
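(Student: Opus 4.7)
The plan is to use an Abel summation identity combined with a weighted Cauchy--Schwarz and, for $m\geq 2$, a doubling bootstrap. From the two representations
\[ S_n^{-m}f = \sum_{j=0}^n S_j^{-m-1}f, \qquad S_n^{-(m-1)}f = \sum_{j=0}^n (n-j+1)\,S_j^{-m-1}f, \]
both instances of the convolution identity $S_n^\lambda = \sum_j A_{n-j}^{\lambda-\mu-1} S_j^\mu$ with $\mu = -m-1$ (using $A_k^0 = 1$ and $A_k^1 = k+1$), subtraction gives the key Abel identity
\[ (n+1)\,S_n^{-m}f \;=\; S_n^{-(m-1)}f + \sum_{j=0}^n j\,S_j^{-m-1}f. \]
Dividing through by $(n+1)^{2-m}$ and noting that $|S_n^{-(m-1)}f|/(n+1)^{2-m} \leq S_*^{-(m-1)}f$ by definition, the lemma reduces to bounding $(n+1)^{m-2}\bigl|\sum_{j=0}^n j\,S_j^{-m-1}f\bigr|$ by a constant multiple of $R_mf$ (modulo an additive $S_*^{-(m-1)}f$).

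For $m=1$ this is immediate: Cauchy--Schwarz against the weight $(j+1)$ defining $R_1f$ gives
\[ \Bigl|\sum_{j=0}^n j\,S_j^{-2}f\Bigr| \leq \Bigl(\sum_{j=0}^n \tfrac{j^2}{j+1}\Bigr)^{1/2} R_1f \lesssim n\,R_1f, \]
and the prefactor $(n+1)^{m-2} = 1/(n+1)$ cancels the arithmetic growth, producing $R_1f$.

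For $m\geq 2$ the direct Cauchy--Schwarz instead yields a dimension-dependent $\sqrt{\log N}$ factor at $m=2$ or polynomial-in-$n$ growth at $m\geq 3$, and must be replaced by a doubling bootstrap. Choose $n' \in (n, 2n]$ and write $S_n^{-m}f = S_{n'}^{-m}f - \sum_{k=n+1}^{n'} S_k^{-m-1}f$. Cauchy--Schwarz against the weight $(k+1)^{2m-1}$ in $R_mf$ gives
\[ \Bigl|\sum_{k=n+1}^{n'} S_k^{-m-1}f\Bigr| \leq \Bigl(\sum_{k=n+1}^{n'}(k+1)^{-(2m-1)}\Bigr)^{1/2} R_mf \lesssim (n+1)^{-(m-1)}\,R_mf, \]
the decay coming from $\int_n^{2n} x^{-(2m-1)}\,dx \sim n^{-(2m-2)}$. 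Multiplying by $(n+1)^{m-1}$ cancels this decay, so that $(n+1)^{m-1}|S_n^{-m}f - S_{n'}^{-m}f| \lesssim R_mf$; on the other hand $(n+1)^{m-1}|S_{n'}^{-m}f| \leq ((n+1)/(n'+1))^{m-1} S_*^{-m}f \approx 2^{-(m-1)} S_*^{-m}f$, so taking maxima produces a self-improving estimate, and for $m\geq 2$ the coefficient $2^{-(m-1)} \leq 1/2$ is absorbed by rearrangement.

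The main obstacle will be closing this bootstrap uniformly in $n\leq N/4$: the doubling choice $n'=2n$ lies in the admissible range $[0,N/4]$ only for $n\leq N/8$, so for $n\in(N/8,N/4]$ one must either truncate $n'$ at $N/4$ (sacrificing the contraction factor) or apply the Abel identity at the endpoint, absorbing the resulting residual into $2\,S_*^{-(m-1)}f$. This boundary bookkeeping --- analogous to the careful summation-by-parts that the author repairs in the appendix for Lemma~\ref{2} --- is where the coefficient $2$ in front of $S_*^{-(m-1)}f$ on the right-hand side appears, and is expected to be the technically most delicate part of the argument.
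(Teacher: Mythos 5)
Your Abel identity $(n+1)\,S_n^{-m}f = S_n^{-(m-1)}f + \sum_{j=0}^n j\,S_j^{-m-1}f$ is correct, and your treatment of $m=1$ is complete. For $m\geq 2$, however, the argument as written does not close, and the place where it fails is exactly the place you defer: the doubling bootstrap compares $(n+1)^{m-1}|S_n^{-m}f|$ with the value of $S^{-m}$ at a \emph{larger} index $n'\approx 2n$, so the absorption step is legitimate only when $n'$ still lies in the range $[0,N/4]$ over which $S_*^{-m}$ is defined, i.e.\ only for $n\leq N/8$. For $n\in(N/8,N/4]$ neither of your suggested fixes works as stated: truncating $n'$ at $N/4$ destroys the contraction factor $((n+1)/(n'+1))^{m-1}\leq 1/2$ that absorption requires, and ``applying the Abel identity at the endpoint'' is precisely the step you yourself showed to be lossy for $m\geq 2$ (the small-$j$ block of $\sum_j j\,S_j^{-m-1}f$ costs $\sqrt{\log N}$ when $m=2$ and a power of $n$ when $m\geq 3$). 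So the proposal proves the lemma only for $m=1$; the ``boundary bookkeeping'' you label as the delicate part is the missing content, not a routine epilogue. It is repairable --- e.g.\ split the Abel sum at $\lfloor n/2\rfloor$, apply the identity again to rewrite the lower block as $(\lfloor n/2\rfloor+1)S_{\lfloor n/2\rfloor}^{-m}f - S_{\lfloor n/2\rfloor}^{-(m-1)}f$, and feed back in the bound already proved for indices $\leq N/8$ --- but the factor bookkeeping there is genuinely tight (the relevant product of contraction and expansion factors is essentially $1/2$), small $n$ must be handled separately, and the resulting constant in front of $S_*^{-(m-1)}f$ comes out $m$-dependent rather than the stated $2$ (harmless for the paper's application, but not the statement as written).

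For comparison: the paper gives no proof of this lemma and defers to Nevo--Stein (p.\ 147), whose device is backward-looking rather than forward-looking. One sums the identity $S_j^{-m}f = S_n^{-m}f - \sum_{k=j+1}^n S_k^{-m-1}f$ over the upper half $\lfloor n/2\rfloor < j\leq n$ and uses $\sum_{j\leq n}S_j^{-m}f = S_n^{-(m-1)}f$, so that $S_n^{-m}f$ is expressed through $S^{-(m-1)}$ at the two indices $n$ and $\lfloor n/2\rfloor$ plus a weighted sum of $S_k^{-m-1}f$ with $k$ confined to the top half, where the weight $(k+1)^{2m-1}\approx n^{2m-1}$ makes Cauchy--Schwarz against $R_mf$ efficient. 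Since every index appearing is $\leq n\leq N/4$, no absorption and no a priori control of $S_*^{-m}f$ is needed, and the finitary boundary issue that blocks your bootstrap never arises. Your route is genuinely different and could be completed, but in its present form it has a real gap for every $m\geq 2$.
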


Temporarily assuming the below proposition, let us see how the above Lemmas allow us to complete the proof.

\begin{proposition}\label{main}
With
\[ \aligned
R_mf(x)^2 &:= \sum_{0 \leq k \leq N/4} (k+1)^{2m-1}|S_k^{-m-1}f(x)|^2 \\
&= \sum_{0 \leq k \leq N/4} (k+1)^{2m-1} |\triangle^{m} \sigma_{2k}* f(x)|^2, \endaligned \]
there exists absolute constants $C'_m$ so that for each $N$
\[ \|R_m f \|_{L^2(\I^N)} \leq C'_m \| f\|_{L^2(\I^N)},\]
independent of $N$.
\end{proposition}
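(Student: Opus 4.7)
The plan is to apply Plancherel's theorem to reduce the square-function inequality to a uniform scalar estimate on the associated multipliers. First, I would unpack the Ces\`aro means $S_k^{-m-1}$: a direct computation of the complex binomial coefficients $A_j^{-m-1} = \binom{j-m-1}{j}$ gives $A_j^{-m-1} = (-1)^j\binom{m}{j}$ for $0 \leq j \leq m$ and $A_j^{-m-1} = 0$ otherwise, so that
\[
S_k^{-m-1}f \;=\; \sum_{j=0}^{m} (-1)^j \binom{m}{j} \, \sigma_{2(k-j)} * f \;=\; \triangle^m \sigma_{2k} * f.
\]
Combining this with the identity $\F(\triangle^m \sigma_{2k})(S) = \partial^m \kappa_{|S|}^N(2k)$ from the preceding subsection, Plancherel yields
\[
\|R_m f\|_{L^2(\I^N)}^2 \;=\; \sum_{S \in \I^N} |\hat f(S)|^2 \, T_m(|S|), \qquad T_m(r) := \sum_{k=0}^{\lfloor N/4 \rfloor} (k+1)^{2m-1} \bigl|\partial^m \kappa_r^N(2k)\bigr|^2,
\]
so it suffices to show $\sup_{r, N} T_m(r) \leq C_m^2$.

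Next, I would invoke the closed form
\[
\partial^m \kappa_r^N(2k) \;=\; (-4)^m \, \frac{\binom{N-2m}{r-m}}{\binom{N}{r}} \, \kappa_{r-m}^{N-2m}\bigl(2(k-m)\bigr)
\]
obtained by iterating Lemma \ref{diff}. This vanishes identically for $r<m$, since the binomial coefficient is zero. For $m \leq r \leq N/2$ a routine factorial manipulation gives the prefactor bound $\binom{N-2m}{r-m}\binom{N}{r}^{-1} \lesssim_m (r/N)^m$, while Lemma \ref{KR} supplies the Gaussian-type decay $|\kappa_{r-m}^{N-2m}(2(k-m))| \lesssim_m e^{-c rk/N}$ in the principal range $r,k \geq 2m$; the $O_m(1)$ exceptional indices are handled by the trivial bound $|\kappa|\leq 1$. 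Combining,
\[
|\partial^m \kappa_r^N(2k)|^2 \;\lesssim_m\; (r/N)^{2m} \, e^{-c' r k / N}.
\]

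Substituting into $T_m(r)$ and dominating the resulting sum by a Gamma integral via $u = c'rx/N$,
\[
T_m(r) \;\lesssim_m\; (r/N)^{2m} \int_0^{\infty} x^{2m-1} e^{-c' r x /N} \, dx \;\lesssim_m\; (r/N)^{2m} \cdot (N/r)^{2m} \;\lesssim_m\; 1,
\]
uniformly in $r \geq m$ and $N$. The main obstacle is precisely this last cancellation: the prefactor $(r/N)^m$ arising from iterating the difference formula must conspire with the effective support length $(N/r)^m$ of the $k$-sum to produce a dimension-free constant. Without the prefactor (i.e.\ working directly with $\sigma_{2k}$ rather than with $\triangle^m \sigma_{2k}$) the sum diverges like $N^2/r^2$ at small $r$, which is exactly why one must pass through the higher-order differences $\triangle^m$; the boundary regimes $r<m$ (vanishing) and $k<2m$ (finitely many terms) contribute only an $m$-dependent constant and are absorbed into $C_m'$.
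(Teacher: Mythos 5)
Your proposal is correct and takes essentially the same route as the paper: Plancherel reduces the square function to the uniform scalar bound on $\sum_k (k+1)^{2m-1}|\partial^m \kappa_r^N(2k)|^2$, the closed form from iterating Lemma \ref{diff} supplies the prefactor $\binom{N-2m}{r-m}/\binom{N}{r}$, Lemma \ref{KR} gives the exponential decay, and the key cancellation is the prefactor against the $(N/r)^{2m}$ coming from the length of the effective $k$-range. The only cosmetic differences are that you bound the prefactor directly by $(r/N)^m$ and sum the series via a Gamma integral, whereas the paper splits into the regimes $r \leq Km$ and $Km < r$ and uses a generating-function lemma plus the mean value theorem, and you absorb the small-$k$ terms (where the closed form does not literally apply) through a trivial multiplier bound, whereas the paper truncates those finitely many summands before Plancherel using crude $L^2$ operator bounds.
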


\begin{proof}[Proof of Theorem \ref{big}, Assuming Proposition \ref{main}]
By Proposition \ref{smooth},
we know that there exists absolute constants $\{A_p\}$, $1<p \leq \infty$, so that for each $N$,
\[ \| S_*^0 |f| \|_{L^p(\I^N)} \leq A_p \|f\|_{L^p(\I^n)},\]
where the operators $\{S_*^0\}$ are $N$-dependent, but the bounds are not.

By Lemma \ref{1}, for each $\alpha > 0, \beta \in \RR$, we therefore have the bound
\[ \| S_*^{\alpha+i\beta} f \|_{L^p(\I^N)} \leq C_\alpha e^{2 \beta^2} A_p \| f \|_{L^p(\I^N)},\]
valid for each $N$.

By Proposition \ref{main}, Lemma \ref{3}, and induction on $m$, we see that there exist absolute constants $\{B_2^m\}, m \geq 1$ so that for all $N$,
\[ \|S_*^{-m} f\|_{L^2(\I^N)} \leq B_2^m \|f\|_{L^2(\I^N)}.\]
By Lemma \ref{2}, this means that for all $N$, there exist absolute constants $D_2^m$ so that
\[ \|S_*^{-m+i\beta} f\|_{L^2(\I^N)} \leq e^{3\beta^2} D_2^m \|f\|_{L^2(\I^N)} \]
for all $N$.

To prove the theorem, we linearize our maximal function $\mathcal{M} f \equiv S_*^{-1} f(x)$: we let
\[ \mathcal{R} : \I^N \to [0,M] \text{ even} \] 
be a ``choice'' function satisfying
\[ S_*^{-1} f (x) = \sigma_{\mathcal{R}(x)} * f(x),\]
and define the linear operators
\[ S_{\mathcal{R}}^{\lambda}f(x) := (\mathcal{R}(x) + 1)^{- \lambda -1} S_{\mathcal{R}(x)}^\lambda f(x).\]
By analytic interpolation of operators as in \cite{S} or \cite[\S 5]{NS}, we may bound $S_{\mathcal{R}}^{-1} f$ in $L^p$ for each $p > 1$.
\end{proof}
It remains only to prove Proposition \ref{main}, which we accomplish in the following subsection.

\subsection{Proof of Proposition \ref{main}}
\begin{proof}
By Plancherel, it is enough to show that there exists an absolute constant, $C'_m$, independent of $N$, so that for all $0 \leq r \leq N$
\[ \sum_{k=0}^{M/2} (k+1)^{2m-1} |\F \triangle^{m} \sigma_{2k}|^2(r)
\leq C'_m. \]
We recall $M = \lceil N/2 \rceil$.

We need to show
\[ \sup_{r \leq N} \ \sum_{k=0}^{M/2} (k+1)^{2m-1} |\partial^{m} \kappa_r^N(2k)|^2 \leq C'_m.\]
Using the symmetry of the Krawtchouk polynoials,
\[ |\partial^m \kappa_r^N(2k)| = |\partial^m \kappa_{N-r}^N(2k)|,\]
it suffices only to prove
\[ \sup_{r \leq M} \ \sum_{k=0}^{M/2} (k+1)^{2m-1} |\partial^{m} \kappa_r^N(2k)|^2 \leq C'_m.\]

To do so, we can and will assume that $N$ is much larger than $m$ -- say $N \geq (10m)^{10m}$. This is since we are free to increase $C'_m$ finitely many times -- as long as the number of times we increase $C'_m$ is independent of $N$.

We will use the upper bound \eqref{e:smallr} valid for $m < k,r \leq M$,
obtained from Lemma \ref{KR}, but we first dispose of the boundary case $r=m$, in which case
\[ \kappa_{r-m}^{N-2m}(2(k-m)) = \kappa_{0}^{N-2m}(2(k-m)) = 1.\]
In this instance, we estimate
\[ \aligned
\sum_{k=0}^{M/2} (k+1)^{2m-1} |\partial^{m} \kappa_r^N(2k)|^2 &\leq
\sum_{k=1}^{N} k^{2m-1} \cdot \left( \frac{1}{\binom{N}{m}} \right)^2 \\
&\lesssim \left( \frac{N^m}{\binom{N}{m}} \right)^2 \\
&\leq \left( \frac{N}{N-m} \right)^{2m} \cdot (m!)^2 \\
&\lesssim_m 1, \endaligned\]
since $\frac{N}{N-m} \leq 2$ because $N$ is so much larger than $m$.

Henceforth, we may assume $r >m$, so that the estimate
\[ \aligned
|\partial^{m} \kappa_r^N(2k)| &\leq
(-4)^m \frac{\binom{N-2m}{r-m}}{\binom{N}{r}} e^{-2c \frac{r-m}{N-2m} \cdot (k-m)} \\
&\lesssim_m \frac{\binom{N-2m}{r-m}}{\binom{N}{r}} e^{-2c \frac{r-m}{N-2m} \cdot (k-m)} \endaligned\]
holds for $k > m$ (recall that $\partial^m \kappa_r(2k) = 0$ for $m \geq k$).
Indeed, we may bound
\[ \aligned
\sum_{k=0}^{M/2} (k+1)^{2m-1} |\partial^{m} \kappa_r^N(2k)|^2 &\leq
\sum_{k=m}^{\infty} (k+1)^{2m-1} |\partial^{m} \kappa_r^N(2k)|^2 \\
&\lesssim_m \sum_{k=m}^{\infty} (k+1)^{2m-1} \left| \frac{\binom{N-2m}{r-m}}{\binom{N}{r}} e^{-2c \frac{r-m}{N-2m} \cdot (k-m)} \right|^2 \\
&= \left( \frac{\binom{N-2m}{r-m}}{\binom{N}{r}} \right)^2 \sum_{k=m}^{\infty} (k+1)^{2m-1} e^{-4c \frac{r-m}{N-2m} \cdot (k-m)} \\
&= \left( \frac{\binom{N-2m}{r-m}}{\binom{N}{r}} \right)^2 \sum_{k=0}^{\infty} (k+(m+1))^{2m-1} e^{-4c \frac{r-m}{N-2m} \cdot k}. \endaligned\]

We record the following easy lemma concerning infinite series:
\begin{lemma}
For $|t|<1$, define the operator, $L$, acting on smooth $g : \RR \to \RR$ by
\[ Lg(t):= t \frac{dg}{dt}(t),\]
and let $L^n$ denote the $n$-fold composition, i.e. $L^ng= L(L^{n-1}g), \ n \geq 2$.
Then
\[ L^n \frac{1}{1-t} = \sum_{k=0}^{\infty} k^n t^k \]
can be expressed as $\frac{t^n + p_n(t)}{(1-t)^{n+1}},$
where $p_n(t) := \sum_{j <n} a_j^n t^j$
is a polynomial of degree $n-1$.

In particular, for $t < 1$, we may bound
\[ \left|\frac{t^n + p_n(t)}{(1-t)^{n+1}} \right| \leq \frac{A_n}{(1-t)^{n+1}},\]
where we let 
\begin{equation}\label{e:coeffmax}
A_n:= 1+\sum_{j<n} |a_j^n|.
\end{equation}
\end{lemma}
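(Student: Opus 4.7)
I will establish all three assertions---the series identity, the rational form, and the uniform bound---by induction on $n$ together with standard termwise differentiation of power series.

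For the series identity $L^n \frac{1}{1-t} = \sum_{k \geq 0} k^n t^k$, I begin from the geometric series $\frac{1}{1-t} = \sum_{k \geq 0} t^k$, observe that $L(t^k) = kt^k$, and use absolute and uniform convergence on compact subsets of $\{|t|<1\}$ to justify termwise differentiation; iterating $L$ exactly $n$ times yields the claim.

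The rational form is proved by induction on $n$, with base case $n=0$ trivial (take $p_0 \equiv 0$). For the inductive step, I apply $L = t\,\frac{d}{dt}$ to $\frac{t^n + p_n(t)}{(1-t)^{n+1}}$ via the quotient rule, obtaining
\[ L^{n+1}\frac{1}{1-t} = \frac{(nt^n + tp_n'(t))(1-t) + (n+1)(t^{n+1} + tp_n(t))}{(1-t)^{n+2}}. \]
Expanding the numerator, the cross-cancellation $-nt^{n+1} + (n+1)t^{n+1} = t^{n+1}$ isolates the leading monomial, while every remaining contribution ($nt^n$, $tp_n'(t)$, $-t^2p_n'(t)$, $(n+1)tp_n(t)$) has degree at most $n$, using $\deg p_n \leq n-1$. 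Hence the numerator equals $t^{n+1} + p_{n+1}(t)$ with $\deg p_{n+1} \leq n$, completing the induction. An explicit recursion for $a_j^{n+1}$ in terms of $a_j^n$ is easy to read off but not needed.

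Finally, for the uniform bound, the triangle inequality gives $|t^n + p_n(t)| \leq |t|^n + \sum_{j<n}|a_j^n|\,|t|^j \leq 1 + \sum_{j<n}|a_j^n| = A_n$ whenever $|t|<1$, and $(1-t)^{n+1}>0$ for $t<1$, so dividing yields the stated estimate. The entire argument is essentially bookkeeping around the single quotient-rule computation in the inductive step; I anticipate no substantive obstacle.
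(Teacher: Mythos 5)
Your proposal is correct and matches the paper's approach: the paper's proof is just the one-line remark ``this follows by induction,'' and your argument supplies exactly that induction (termwise application of $L$ to the geometric series, the quotient-rule step isolating $t^{n+1}$ with the remainder of degree at most $n$, and the triangle-inequality bound). No issues.
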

\begin{proof}[Proof of Lemma]
This follows by induction.
\end{proof}

Now, following the lead (and notation) of \cite[\S 4]{HKS}, we set
\begin{equation}\label{e:alpha}
\alpha = \alpha(r):= 4c' \cdot \frac{r-m}{N-2m};
\end{equation}
where $c' := \min\{ c, \frac{1}{100}\}$, so that
\[ |\alpha(r)| \leq 4c' \cdot \frac{r-m}{N-2m} \leq \frac{4c'}{2} < 1/50\]
for all $r$.

We now generously estimate
\[ \aligned
\sum_{k=0}^{\infty} (k+(m+1))^{2m-1} e^{-4c \frac{r-m}{N-2m} \cdot k} &\leq
\sum_{k=0}^{\infty} (k+(m+1))^{2m-1} e^{-\alpha \cdot k} \\
&= \sum_{k=0}^{\infty} \left( \sum_{j=0}^{2m-1} \binom{2m-1}{j} k^j (m+1)^{2m-1-j} \right) e^{-\alpha \cdot k} \\
&\leq (10m+10)^{10m} \cdot \sum_{k=0}^{\infty} \left( \sum_{j=0}^{2m-1} k^j \right) e^{-\alpha \cdot k} \\
&\lesssim_m \sum_{j=0}^{2m-1} \left( \sum_{k=0}^{\infty} k^j e^{-\alpha \cdot k} \right) \\
&\leq \sum_{j=0}^{2m-1} \frac{A_j}{(1-e^{-\alpha})^{j+1}} \\
&\lesssim \sum_{j=0}^{2m-1} \frac{A_j}{\alpha^{j+1}} \\
&\leq 2m \cdot A_m^* \cdot \alpha^{-2m} \\
&\lesssim_m \alpha^{-2m}, \endaligned \]
where we let 
\[ A_m^*:= \max_{n \leq m} A_n,\]
and the $\{ A_n \}$ are defined in \eqref{e:coeffmax}.
Note that we used that $\alpha < 1/50$ in estimating
\[ \frac{1}{1 - e^{-\alpha}} \lesssim \frac{1}{\alpha}.\]

The upshot is that we may bound
\[ \sum_{k=0}^{\infty} (k+(m+1))^{2m-1} e^{-4c \frac{r-m}{N-2m} \cdot k} \lesssim_m \left( \frac{N-2m}{r-m} \right)^{2m},\]
so that we have
\[
\sum_{k=0}^{M/2} (k+1)^{2m-1} |\partial^{m} \kappa_r^N(2k)|^2 \lesssim
\left( \left( \frac{\binom{N-2m}{r-m}}{\binom{N}{r}} \right) \left( \frac{N-2m}{r-m} \right)^{m} \right)^2.\]

The task is now to show that for all $N$, there exists an absolute $C_m'$ so that for all $m < r \leq N$
\begin{equation}\label{e:goal}
\left( \frac{\binom{N-2m}{r-m}}{\binom{N}{r}} \right) \left( \frac{N-2m}{r-m} \right)^{m} \leq C'_m.
\end{equation}
To do so, we observe that the domination of $m$ by $N$ allows us to approximate
\begin{equation}\label{e:comp} 
\frac{(N-2m)!}{N!} \approx_m N^{-2m}.
\end{equation}
Indeed, since $(10m)^{10m} \leq N$ implies that $(N-2m) \geq N/2$, we may bound the left hand side of \eqref{e:comp} by
\[ N^{-2m} \leq \frac{(N-2m)!}{N!} \leq 2^{2m} \cdot N^{-2m}.\]
Using this estimate, we re-organize the left side of \eqref{e:goal} and bound:
\[ \aligned
\left( \frac{\binom{N-2m}{r-m}}{\binom{N}{r}} \right) \left( \frac{N-2m}{r-m} \right)^{m} 
&= \frac{(N-2m)!}{N!} \cdot \frac{r!}{(r-m)!} \cdot \frac{(N-r)!}{(N-r-m)!} \cdot \frac{(N-2m)^{m}}{(r-m)^m} \\
&\lesssim_m \frac{1}{N^{2m}} \cdot \frac{r!}{(r-m)!} \cdot \frac{(N-r)!}{(N-r-m)!} \cdot \frac{(N-2m)^{m}}{(r-m)^m}. \endaligned \]

We now fix a large constant $K$, and consider the two regimes:
\begin{itemize}
\item $r \leq K m$; and
\item $K m < r < N/2$.
\end{itemize}

In the first regime, we have $\frac{r!}{(r-m)!} \lesssim_m 1$, and $\frac{(N-r)!}{(N-r-m)!} \leq (N-r)^m$, so we may bound
\[ \aligned
\frac{1}{N^{2m}} \cdot \frac{r!}{(r-m)!} \cdot \frac{(N-r)!}{(N-r-m)!} \cdot \frac{(N-2m)^{m}}{(r-m)^m} &\lesssim_m \frac{1}{N^{2m}} \cdot \frac{(N-r)!}{(N-r-m)!} \cdot \frac{(N-2m)^{m}}{(r-m)^m}\\
&\leq \frac{1}{N^{2m}} \cdot (N-r)^{m} \cdot (N-2m)^m \\
& \leq 1. \endaligned\]

In the second regime, arguing as in \eqref{e:comp}, we have
\[ \frac{r!}{(r-m)!} \approx_m r^m \approx_m (r-m)^m,\]
so that we may similarly bound
\[ \aligned
\frac{1}{N^{2m}} \cdot \frac{r!}{(r-m)!} \cdot \frac{(N-r)!}{(N-r-m)!} \cdot \frac{(N-2m)^{m}}{(r-m)^m} &\lesssim_m \frac{1}{N^{2m}} \cdot r^m \frac{(N-r)!}{(N-r-m)!} \cdot \frac{(N-2m)^{m}}{r^m}\\
&\leq \frac{1}{N^{2m}} \cdot (N-r)^{m} \cdot (N-2m)^m \\
& \leq 1, \endaligned\]
which yields \eqref{e:goal}, and completes the proof.
\end{proof}

\section{Failure of the Uniform Weak-Type Estimate}
We may assume $N$ is even. Testing $\delta = \mathbf{1}_{\{(0,\dots,0)\}}$, the point mass at the origin,
\[ |\{ (\delta)^{**} > 0 \}| = \left| \left\{ (\delta)^{**} \geq \binom{N}{N/2}^{-1} \right\} \right| = 2^N,\]
while $\binom{N}{N/2} \approx \frac{2^N}{\sqrt{N}}$ by Stirling's Formula, so
\[ \binom{N}{N/2}^{-1} \left| \left\{  (\delta)^{**} \geq \binom{N}{N/2}^{-1} \right\} \right| \gtrsim \sqrt{N} \to \infty \]
as $N \to \infty$.
Consequently, no uniform weak-type bound holds. In fact, this construction generalizes to more complicated product settings:

\begin{proposition}
For $k \geq 1$, we consider the groups $\Z_{k+1}^N$, equipped with the ``$l^0$'' metric,
\[ |y| = \left| \big( y(1), \dots, y(N) \big) \right| = \#\{ 1 \leq i \leq N : y(i) \neq 0\},\]
and counting measure.
With $\sigma_j = \frac{1}{| \{|y|=j \} |} 1_{ \{|y|=j \} }$ as above, we similarly define
\[ f^{**} (x):= \sup_j | \sigma_j *f|(x).\]
Then for any $k \geq 1$, the best constant, $C_k(N)$, satisfying the weak-type bound
\[ \| f^{**} \|_{L^{1,\infty}( \Z_{k+1}^N ) } \leq C_k(N) \| f\|_{L^1( \Z_{k+1}^N ) } \]
grows at least like $\sqrt{N}$.
\end{proposition}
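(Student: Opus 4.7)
The plan is to mimic the hypercube argument by testing against $\delta$, the point mass at the origin of $\Z_{k+1}^N$. Since $\delta$ has unit $L^1$ norm and is radial, the convolution $\sigma_j * \delta(x)$ equals $\sigma_j(x)$, which is supported on $\{|x|=j\}$, so the supremum collapses to one term:
\[ \delta^*(x) = \sigma_{|x|}(x) = \frac{1}{|\{y : |y| = |x|\}|} = \frac{1}{\binom{N}{|x|} k^{|x|}}. \]
In particular $\delta^*$ is strictly positive on all of $\Z_{k+1}^N$, and its minimum value is $1/M_N$ where $M_N := \max_{0 \leq i \leq N} \binom{N}{i} k^i$.

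Next I would estimate $M_N$ asymptotically. By the binomial theorem $\sum_i \binom{N}{i} k^i = (k+1)^N$, and the summand $\binom{N}{i} k^i$ is the unnormalized probability mass function of a $\text{Binomial}(N, k/(k+1))$ distribution scaled by $(k+1)^N$. The mode sits near $i^* = \lfloor Nk/(k+1) \rfloor$, and a direct Stirling computation (or a local central limit theorem) gives
\[ M_N = \binom{N}{i^*} k^{i^*} \approx_k \frac{(k+1)^N}{\sqrt{N}}. \]
Choosing the threshold $\lambda := 1/M_N$, the level set $\{\delta^* \geq \lambda\}$ is all of $\Z_{k+1}^N$, which has counting measure $(k+1)^N$. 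Therefore
\[ \lambda \cdot |\{ \delta^* \geq \lambda \}| \;=\; \frac{(k+1)^N}{M_N} \;\gtrsim_k\; \sqrt{N} \cdot \|\delta\|_{L^1(\Z_{k+1}^N)}, \]
which forces $C_k(N) \gtrsim_k \sqrt{N}$.

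The whole argument is essentially a one-line calculation once one identifies $\delta^*$; the only step that requires real care is the asymptotic $M_N \approx_k (k+1)^N/\sqrt{N}$, which I expect to be the main (and only) obstacle, and which is handled by the standard Stirling estimate
\[ \binom{N}{i^*} \left(\frac{k}{k+1}\right)^{i^*} \left(\frac{1}{k+1}\right)^{N-i^*} \approx \frac{1}{\sqrt{2\pi N \cdot \frac{k}{(k+1)^2}}} \]
applied at $i^* = \lfloor Nk/(k+1) \rfloor$. The case $k = 1$ recovers the hypercube lower bound discussed in the preceding paragraph.
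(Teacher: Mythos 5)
Your proposal is correct and follows essentially the same route as the paper: test against the point mass $\delta$, observe $\delta^*(x) = 1/\bigl(\binom{N}{|x|}k^{|x|}\bigr) > 0$ everywhere, and use Stirling's formula to see that the largest sphere has size $\approx_k (k+1)^N/\sqrt{N}$, so that the level set at height $\approx_k \sqrt{N}(k+1)^{-N}$ is all of $\Z_{k+1}^N$. The only cosmetic difference is that you phrase the Stirling step via the mode of the $\mathrm{Binomial}(N,k/(k+1))$ mass function, which is exactly the estimate the paper invokes.
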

\begin{proof} We may assume that $N$ is a multiple of $k+1$.

Noting that inside $\Z_{k+1}^N$
\[ | \{|y|=j \} | = k^j \binom{N}{j}, \]
we use Stirling's formula to approximate
\[ \max_j \{ | \{|y|=j \} | \} \approx \left| \left\{ |y|= \frac{k}{k+1} N \right\} \right| \approx_k \frac{1}{\sqrt{N}} (k+1)^N.\]
With $\delta$ as above (the point-mass at the origin), we similarly estimate
\[
\sqrt{N} (k+1)^{-N} |\{ (\delta)^{**} \gtrsim  \sqrt{N} (k+1)^{-N} \}| =
\sqrt{N} (k+1)^{-N} |\{ (\delta)^{**} >  0 \}| = \sqrt{N}.\]
\end{proof}

\begin{remark}
Further investigation of the spherical maximal function on the groups $\Z_{k+1}^N$ has recently been conducted in \cite{GKKS}; in particular, the spherical maximal function is shown to satisfy dimension-independent bounds on $L^p$, $p > 1$.
\end{remark}

\section{Appendix}
We here provide a full proof of Lemma \ref{2}, reproduced below for the reader's convenience.

\begin{lemma}[cf.\ \cite{NS}, Proof of Lemma 5, pp. 145-146]\label{22}
For each nonpositive integer $-m \leq 0$, and each real $\beta$, there exist positive constants $C_m$ so that
\[ S^{-m+i \beta}_*f \leq C_m e^{3 \beta^2} \left( S_*^{-m-1}f + S_*^{-m}f + \dots + S_*^{-1} f \right)\]
holds pointwise.
\end{lemma}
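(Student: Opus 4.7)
My plan is to follow the summation-by-parts strategy of Nevo--Stein, being careful to fill in the gap they leave open. The starting point is the identity
\[ S_n^{-m+i\beta}f = \sum_{k=0}^n A_{n-k}^{i\beta}\,S_k^{-m-1}f, \]
which comes from the Cesaro convolution formula $A_n^{\lambda+\mu+1}=\sum_j A_j^\lambda A_{n-j}^\mu$ applied with $\lambda=-m-1$, $\mu=i\beta$. This expresses the complex-order Cesaro mean through the integer-order quantity $S_k^{-m-1}f$, which is simply the $m$th finite difference of $T_k=\sigma_{2k}*f$.

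From here I would apply Abel's identity iteratively, using the two standard ingredients $A_j^\mu - A_{j-1}^\mu = A_j^{\mu-1}$ and $\sum_{j\leq k}S_j^\lambda f=S_k^{\lambda+1}f$. Each application peels off a boundary term while simultaneously raising the order of $S$ and lowering the order of $A$ in the remaining sum. Carrying this out $m$ times yields the decomposition
\[ S_n^{-m+i\beta}f = \sum_{l=0}^{m-1}A_l^{i\beta-l}\,S_{n-l}^{-m+l}f + \sum_{k=0}^{n-m}A_{n-k}^{i\beta-m}\,T_k. \]
The two size estimates I would need are $|A_l^{i\beta-l}|\lesssim_m (1+|\beta|)^l e^{O(\beta^2)}$, which follows directly from the product formula $A_l^{i\beta-l}=\prod_{j=1}^l (i\beta-l+j)/j$ together with the elementary bound $\prod_k\sqrt{1+\beta^2/k^2}\leq e^{O(\beta^2)}$, and $|A_j^{i\beta-m}|\lesssim_m e^{O(\beta^2)}/j^m$ for $j\geq m$, obtained from the asymptotic $A_j^\lambda\sim j^\lambda/\Gamma(\lambda+1)$.

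The principal obstacle, and the precise location of the gap in Nevo--Stein, is that a term-by-term estimate of the boundary contributions using $|S_{n-l}^{-m+l}f|\leq(n-l+1)^{-m+l+1}S_*^{-m+l}f$ produces, after division by the normalization $(n+1)^{i\beta-m+1}$, a spurious factor of $(n+1)^l$ on the $l$th boundary term. Nevo--Stein's summation-by-parts step in this exact place implicitly assumes a cancellation that is not justified as written. The remedy I would carry out is to perform the Abel expansion in a grouped manner: rather than bounding each boundary term in isolation, I pair each with a matching initial segment of the residual sum, exploiting the telescoping structure of the identity $S_j^\lambda-S_{j-1}^\lambda=S_j^{\lambda-1}$ so that the apparent $(n+1)^l$ growth cancels across consecutive terms. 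After this bookkeeping, and combining the $e^{O(\beta^2)}$ contributions collected from both the boundary coefficients $A_l^{i\beta-l}$ and the residual kernel $A_j^{i\beta-m}$, one arrives at the desired pointwise inequality
\[ S_*^{-m+i\beta}f \leq C_m e^{3\beta^2}\bigl(S_*^{-m-1}f + S_*^{-m}f + \cdots + S_*^{-1}f\bigr), \]
with $C_m$ depending only on $m$ and independent of the dimension $N$.
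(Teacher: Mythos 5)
Your setup is correct, and you have in fact diagnosed the Nevo--Stein gap accurately: starting from $S_n^{-m+i\beta}f=\sum_{k=0}^n A_{n-k}^{i\beta}S_k^{-m-1}f$ and iterating Abel summation over the \emph{full} range produces boundary terms $A_l^{i\beta-l}S_{n-l}^{-m+l}f$ whose coefficients have index $l\leq m-1$, hence no decay in $n$, so after dividing by the normalization $(n+1)^{-m+1}$ the $l$th term carries a spurious factor $(n+1)^l$. The problem is that your proposed remedy --- ``perform the Abel expansion in a grouped manner,'' pairing each boundary term with ``a matching initial segment of the residual sum'' so that the growth ``cancels across consecutive terms'' --- is asserted rather than proved. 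No grouping identity is written down, no cancellation is exhibited, and it is precisely this step that constitutes the content of the lemma; nothing in the telescoping relation $S_j^\lambda-S_{j-1}^\lambda=S_j^{\lambda-1}$ by itself produces the claimed cancellation. As it stands the proposal stops exactly where the real work begins, so there is a genuine gap.

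The paper's repair is different and concrete: it never lets the coefficient index get small. For $n>10m$ one splits at the midpoint, $S_n^{-m+i\beta}f=\Sigma_1+\Sigma_2$ with $\Sigma_1=\sum_{k=0}^{n_2+m}A_{n-k}^{i\beta}S_k^{-m-1}f$, $n_2=\lfloor n/2\rfloor$. The tail $\Sigma_2$ is estimated trivially ($|A_{n-k}^{i\beta}|\lesssim e^{2\beta^2}$, $|S_k^{-m-1}f|\leq (k+1)^{-m}S_*^{-m-1}f$, at most $n$ terms), giving $\lesssim e^{2\beta^2}(n+1)^{1-m}S_*^{-m-1}f$. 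Summation by parts, via the identity $(**)$, is then applied only to $\Sigma_1$ and only down to indices $\approx n_2$: the $l$th boundary term is $A_{n-(n_2+m-l)}^{-l+i\beta}S_{n_2+m-l}^{-m+l}f$, whose coefficient index $n-(n_2+m-l)\gtrsim n$ is large, so Lemma \ref{B}(3) yields $|A_{n-(n_2+m-l)}^{-l+i\beta}|\lesssim_l e^{3\beta^2}(n+1)^{-l}$, exactly neutralizing the $(n+1)^{l+1-m}$ size of $S_{n_2+m-l}^{-m+l}f$; the final residual $\sum_{k=0}^{n_2}A_{n-k}^{-m+i\beta}S_k^{-1}f$ is handled the same way, and the regime $n\leq 10m$ is disposed of crudely using $S_*^{-m-1}f$ alone. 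To complete your argument you would need this midpoint truncation (or an equivalent device guaranteeing large coefficient indices before any term-by-term estimation). A minor additional caution: your bound $|A_j^{-m+i\beta}|\lesssim_m e^{O(\beta^2)}j^{-m}$ via the asymptotic $A_j^\lambda\sim j^\lambda/\Gamma(\lambda+1)$ needs uniformity in $\beta$ near $\beta=0$, where $\Gamma(1-m+i\beta)$ has a pole; the clean statement to invoke is Lemma \ref{B}(3).
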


To do so, we will need to recall a few results from \cite[\S 2.2]{NS} concerning complex Cesaro sums.

\begin{lemma}[\cite{NS}, Lemma 2]\label{A}
For $\lambda, \delta \in \C$,
\begin{enumerate}
\item We have the following pointwise ``convolution'' identity:
\[ S_n^{\lambda + \delta}f(x) = \sum_{k=0}^n A_{n-k}^{\delta - 1} S_k^\lambda f(x);\]
\item $A_n^\lambda - A_{n-1}^\lambda = A_n^{\lambda -1}$; and
\item We have the following pointwise identity:
\[ S_n^\lambda f(x) - S_{n-1}^\lambda f(x) = S_n^{\lambda -1 }f(x).\]
\end{enumerate}
\end{lemma}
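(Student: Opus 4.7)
The three identities are classical scalar/operator identities for Cesaro-type sums, so the plan is essentially bookkeeping combined with one appeal to the generating-function identity for the complex binomial coefficients.

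I would begin with (2), which is a one-line computation directly from the definition $A_n^\lambda = (\lambda+1)(\lambda+2)\cdots(\lambda+n)/n!$. Factoring out the common product $(\lambda+1)\cdots(\lambda+n-1)$ from $A_n^\lambda - A_{n-1}^\lambda$ leaves the factor $(\lambda+n)/n - 1 = \lambda/n$, and one recognizes the result as $\lambda(\lambda+1)\cdots(\lambda+n-1)/n! = A_n^{\lambda-1}$. (The boundary cases $n=0$ and $n=1$ are checked by hand using $A_{-1}^\lambda = 0$, $A_0^\lambda = 1$.)

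Next I would prove (1) using the generating function $\sum_{n\geq 0} A_n^\lambda z^n = (1-z)^{-\lambda-1}$, which follows by the general binomial theorem. The factorization $(1-z)^{-\lambda-\delta-1} = (1-z)^{-\delta}\cdot(1-z)^{-\lambda-1}$ yields, by comparing coefficients, the scalar convolution identity
\[ A_n^{\lambda+\delta} = \sum_{l=0}^{n} A_{n-l}^{\delta-1}\, A_l^{\lambda}. \]
Substituting this into the definition of $S_n^{\lambda+\delta}f$ and swapping the order of summation (setting $l = k - j$ where $j$ indexes the inner $\sigma_{2j}$) gives
\[ S_n^{\lambda+\delta}f(x) = \sum_{j=0}^{n}\sigma_{2j}\!*\!f(x)\sum_{l=0}^{n-j} A_{n-j-l}^{\delta-1} A_l^{\lambda} = \sum_{k=0}^{n} A_{n-k}^{\delta-1}\,S_k^{\lambda}f(x), \]
which is exactly (1). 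For readers who prefer to avoid generating functions, the identity $A_n^{\lambda+\delta} = \sum_l A_{n-l}^{\delta-1}A_l^{\lambda}$ can alternatively be established by induction on $n$ using (2).

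Finally, (3) falls out of (2) with no further work: expanding
\[ S_n^\lambda f(x) - S_{n-1}^\lambda f(x) = A_0^\lambda\,\sigma_{2n}\!*\!f(x) + \sum_{k=0}^{n-1}\bigl(A_{n-k}^\lambda - A_{n-1-k}^\lambda\bigr)\sigma_{2k}\!*\!f(x), \]
and invoking (2) on each bracketed difference, the right-hand side collapses to $\sum_{k=0}^n A_{n-k}^{\lambda-1}\sigma_{2k}\!*\!f(x) = S_n^{\lambda-1}f(x)$, once one notes $A_0^\lambda = A_0^{\lambda-1} = 1$.

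No step here is a genuine obstacle; the only thing to be careful about is the index bookkeeping in (1) and the fact that all identities are purely formal in $\lambda,\delta \in \mathbb{C}$, so no convergence issues arise (everything is a finite sum for each fixed $n$). This justifies using (1) freely for complex parameters in the subsequent lemmas.
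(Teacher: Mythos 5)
Your proof is correct. Note, however, that the paper itself offers no argument for this lemma: it is imported wholesale as \cite{NS}, Lemma 2, where these identities are in turn part of the classical theory of Ces\`aro $(C,\alpha)$ means (the generating-function identity $\sum_{n\geq 0}A_n^\lambda z^n=(1-z)^{-\lambda-1}$ and its consequences go back to Zygmund's treatment of Ces\`aro summability). So your route is not so much a different proof as a proof where the paper gives only a citation. What your write-up buys is self-containedness and a clear accounting of why nothing analytic is at stake: the convolution identity (1) is a coefficient comparison in $(1-z)^{-\lambda-\delta-1}=(1-z)^{-\delta}(1-z)^{-\lambda-1}$ (or, as you note, an induction via (2)), (2) is a one-line factorization, and (3) is Abel summation of the coefficients combined with (2) and $A_0^\lambda=A_0^{\lambda-1}=1$, $A_{-1}^\lambda=0$; since each $S_n^\lambda f$ is a finite sum with $n\leq N/4$, all identities are purely formal in $\lambda,\delta\in\C$, which is exactly what the later lemmas need when they apply (1) with complex parameters such as $\delta=-l+i\beta$. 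Your index bookkeeping in (1) checks out: exchanging the order of summation and setting $l=k-j$ reproduces $\sum_{j=0}^{n}\bigl(\sum_{l=0}^{n-j}A_{n-j-l}^{\delta-1}A_l^{\lambda}\bigr)\sigma_{2j}*f$, which matches the coefficient identity. The only cosmetic caveat is that the binomial series for $(1-z)^{-\lambda-1}$ should be invoked either for $|z|<1$ or at the level of formal power series before comparing coefficients, but this does not affect the argument.
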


\begin{lemma}[\cite{NS}, Lemma 3]\label{B}
For $\alpha,\beta \in \RR$, let $\lambda = \alpha + i\beta$. Then:
\begin{enumerate}
\item For $\alpha > -1$, there exists $b_\alpha > 0$ so that
\[ b_\alpha^{-1} \leq \frac{A_n^\alpha}{(n+1)^\alpha} \leq b_\alpha ;\]
\item For $\alpha > -1$, there exists $a_\alpha > 0$ so that
\[ 1 \leq \left| \frac{A_n^{\alpha + i\beta}}{A_n^\alpha} \right|\leq a_\alpha e^{2\beta^2};\]
\item For $m \in \N$ there exists $B_m$ so that
\[ |(n+1)^m A_n^{-m + i\beta} | \leq B_m e^{3\beta^2}.\]
\end{enumerate}
\end{lemma}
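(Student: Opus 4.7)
Part (1) is the classical asymptotic $A_n^\alpha \sim n^\alpha/\Gamma(\alpha+1)$. The cleanest derivation uses the Gamma representation $A_n^\alpha = \Gamma(n+\alpha+1)/[\Gamma(\alpha+1)\Gamma(n+1)]$ together with the Stirling consequence $\Gamma(n+\alpha+1)/\Gamma(n+1) = n^\alpha\bigl(1+O_\alpha(1/n)\bigr)$, valid for fixed $\alpha > -1$. Since $\Gamma(\alpha+1) \in (0,\infty)$, the ratio $A_n^\alpha/(n+1)^\alpha$ converges to the positive constant $1/\Gamma(\alpha+1)$, and uniform two-sided bounds follow by compactness on the finite initial segment $\{1,\dots,n_0\}$.

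For part (2), I would start from the exact identity
\[
\left|\frac{A_n^{\alpha+i\beta}}{A_n^\alpha}\right|^2 \;=\; \prod_{k=1}^n \frac{(k+\alpha)^2 + \beta^2}{(k+\alpha)^2} \;=\; \prod_{k=1}^n \left(1+\frac{\beta^2}{(k+\alpha)^2}\right),
\]
which is $\geq 1$, giving the lower bound immediately. For the upper bound, set $f(\beta) := \sum_{k\geq 1}\log(1+\beta^2/(k+\alpha)^2)$ (convergent since $\alpha > -1$) and differentiate: $f'(\beta) = 2\beta \sum_{k\geq 1} 1/[(k+\alpha)^2 + \beta^2]$. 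Comparison with $\int_0^\infty dx/[(x+\alpha)^2+\beta^2] \approx \pi/(2|\beta|)$ shows $|f'|$ is bounded by a constant depending only on $\alpha$, so $f(\beta) \leq K_\alpha |\beta|$. Young's inequality $K_\alpha |\beta| \leq 4\beta^2 + K_\alpha^2/16$ then upgrades this to $f(\beta) \leq 4\beta^2 + C_\alpha$, which gives $|A_n^{\alpha+i\beta}/A_n^\alpha| \leq a_\alpha e^{2\beta^2}$ with $a_\alpha := e^{C_\alpha/2}$.

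For part (3), I would split on whether $n < m$ or $n \geq m$. When $n < m$, the product has only $n$ factors, each of magnitude $\leq m+|\beta|$, so $|A_n^{-m+i\beta}| \leq (m+|\beta|)^n/n!$ is polynomial in $|\beta|$ and harmlessly absorbed into $B_m e^{3\beta^2}$. For $n \geq m$, the zero of $\prod_{k=1}^n(k-m+i\beta)$ at $k=m$ factors out an $i\beta$, yielding
\[
A_n^{-m+i\beta} = \frac{i\beta\cdot(-1)^{m-1}}{n!}\prod_{j=1}^{m-1}(j-i\beta) \prod_{j=1}^{n-m}(j+i\beta).
\]
Taking absolute values, the last product equals $(n-m)!\prod_{j=1}^{n-m}\sqrt{1+\beta^2/j^2}$, and the Weierstrass identity $\prod_{j=1}^\infty(1+\beta^2/j^2) = \sinh(\pi\beta)/(\pi\beta)$ bounds the square-root product by $\sqrt{\sinh(\pi\beta)/(\pi\beta)} \lesssim e^{\pi|\beta|/2}/\sqrt{|\beta|}$ for $|\beta| \geq 1$. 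Combining with the elementary $(n+1)^m(n-m)!/n! \lesssim_m 1$ and the polynomial factor $|\beta|\prod_{j=1}^{m-1}\sqrt{j^2+\beta^2}$ (of degree $m$ in $|\beta|$), one obtains $(n+1)^m |A_n^{-m+i\beta}| \lesssim_m |\beta|^{m-1/2} e^{\pi|\beta|/2}$ for $|\beta| \geq 1$, which is dominated by $B_m e^{3\beta^2}$.

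The main obstacle is calibrating the exponential constants correctly in parts (2) and (3). In part (2), the naive convexity bound $\log(1+x) \leq x$ gives $f(\beta) \leq c_\alpha \beta^2$ with $c_\alpha = \sum 1/(k+\alpha)^2$; since $c_\alpha$ blows up as $\alpha \to -1$, this cannot be forced into the form $a_\alpha e^{2\beta^2}$ with a \emph{universal} exponent $2$ in $\beta^2$. The resolution is the finer observation that $f$ grows only linearly in $|\beta|$ at infinity, after which Young's inequality converts linear growth into quadratic growth at any prescribed rate. An analogous subtlety in part (3) is that $\sinh(\pi\beta)$ contributes a factor $e^{\pi|\beta|/2}$ which must be absorbed by $e^{3\beta^2}$; since $\pi|\beta|/2$ is sub-quadratic, this is harmless, but it is precisely why the exponent $3$ appears on the right-hand side.
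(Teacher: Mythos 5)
Your proof is correct. Note, however, that the paper itself offers no proof of this lemma: it is imported verbatim from Nevo--Stein (their Lemma 3), who in turn lean on the classical theory of Ces\`aro numbers, essentially via the Gamma-function representation $A_n^\lambda = \Gamma(n+\lambda+1)/\bigl(\Gamma(\lambda+1)\Gamma(n+1)\bigr)$, complex Stirling asymptotics for $\Gamma(n+\lambda+1)/\Gamma(n+1)\sim n^{\lambda}$, and lower bounds for $|\Gamma(\alpha+1+i\beta)|$ (which decays like $e^{-\pi|\beta|/2}$, the source of the exponential-in-$\beta$ factors). Your argument is a genuinely self-contained, more elementary alternative: everything is done at the level of the finite products defining $A_n^\lambda$, with three good ideas that match the classical picture --- the exact product identity and the derivative-in-$\beta$ bound $|f'(\beta)|\lesssim_\alpha 1$ upgraded by Young's inequality, which is exactly what lets you keep the \emph{universal} exponent $2\beta^2$ while dumping the $\alpha$-dependence (which blows up as $\alpha\to-1^+$) into $a_\alpha$; and in part (3) factoring out the zero $i\beta$ at $k=m$ and invoking the Weierstrass product $\prod_{j\ge1}(1+\beta^2/j^2)=\sinh(\pi\beta)/(\pi\beta)$, so that the growth in $\beta$ is only $e^{\pi|\beta|/2}$ times a polynomial, comfortably inside $B_m e^{3\beta^2}$. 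This also explains, as you note, why the exponents $2$ and $3$ in the statement are generous: the true growth in $|\beta|$ is sub-exponential (linear exponent), both in your derivation and in the Gamma-function route. The only cosmetic omission is that in part (3) you state the $\sinh$ bound for $|\beta|\ge1$ only; since $\sinh(x)/x\le e^{x}$ for all $x>0$ (or since the product is $O(1)$ for $|\beta|\le1$), the same estimate covers small $\beta$, and the degenerate cases $\beta=0$ and $n<m$ are handled exactly as you say.
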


We proceed to the proof:

We wish to show that for each $n \leq M/2$, we have the upper estimate:
\[ |(n+1)^{m-1} S_n^{-m+ i \beta}f| \lesssim_m \sum_{j=1}^{m+1} S_*^{-j}f.\]
To do so, we separate into two regimes:
\begin{enumerate}
\item When $n \leq 10m $; and
\item When $n > 10 m  $.
\end{enumerate}

\begin{proof}[Regime One] In regime one, we can afford to be somewhat crude:

Using the ``convolution'' formula from Lemma \ref{A}, we express
\[ S_n^{-m+i\beta} f = \sum_{k=0}^n A_{n-k}^{i\beta} S_k^{-m-1}f. \]
Using the estimate from Lemma \ref{B},
\[ |A_{n-k}^{i\beta}| \leq a_0 e^{2\beta^2},\]
and the estimate, valid for $0\leq k \leq n \lesssim m$,
\[ |S_k^{-m-1} f| \leq (k+1)^m S_*^{-m-1}f \lesssim (m+1)^m S_*^{-m-1}f,\]
we may bound
\[ \aligned
| (n+1)^{m-1} S_n^{-m+i\beta} f| &\leq (n+1)^{m-1} n \cdot \max_{0 \leq k \leq n} |A_{n-k}^{i\beta}| \cdot |S_k^{-m-1}f| \\
&\lesssim (m+1)^{m-1} \cdot m \cdot a_0 e^{2\beta^2} \cdot S_*^{-m-1}f \\
&\lesssim_m e^{2\beta^2} S_*^{-m-1}f,
\endaligned\]
as desired. Here, we trivially estimated $(m+1)^m \lesssim_m 1$.
\end{proof}

It remains to consider regime two, when $n > 10 m$; we will adhere closely to the proof of \cite[Lemma 5]{NS}. 

First, though, we isolate a useful summation-by-parts identity:
\begin{equation}\label{e:sbp}
\sum_{k=0}^t A_{n-k}^{-l+i\beta} S_k^{-m-1 + l} f = A_{n-t}^{-l + i \beta} S_t^{-m -1 + l}f + \sum_{k=0}^{t-1} A_{n-k}^{-l-1+i\beta} S_k^{-m+l}f.
\end{equation}
\begin{proof}[Proof of \eqref{e:sbp}]
To establish \eqref{e:sbp}, we sum the left hand side by parts to reduce matters to showing that
\begin{equation}\label{e:sbp2}
\sum_{r = 0}^{t-1} \left( A_{n-r}^{-l + i\beta} - A_{n-r-1}^{-l+i\beta} \right) \cdot \left( \sum_{p =0}^r S_p^{-m-1 + l} f \right) = \sum_{k=0}^{t-1} A_{n-k}^{-l-1+i\beta} S_k^{-m+l}f.
\end{equation}
We now apply Lemma \ref{A} (2), with $\lambda = - l + i \beta$, to simplify the left hand side of \eqref{e:sbp2},
\[ \sum_{r =0}^t A_{n-r}^{-l -1 + i\beta} \cdot \left( \sum_{p=0}^{ r} S_p^{-m-1 + l} f \right).\]
By Lemma \ref{A} (3), for each $p \leq r$, we may substitute
\[ S_p^{-m-1 +l}f = S_p^{-m+l} f - S_{p-1}^{-m+l}f,\]
(here used $\lambda = - m + l$) to telescope
\[ \sum_{p =0}^{r} S_p^{-m-1 + l} f = S_r^{-m + l} f, \]
taking into account \eqref{e:id} to ensure cancellation at the lower boundary, $p=0$.
\end{proof}
With this in hand, we turn to regime two, when $n > 10m$.
\begin{proof}[Regime Two]
Setting $n_2 := \lfloor n/2 \rfloor$, we again use the convolution formula from Lemma \ref{A} to decompose
\[ \aligned
S_n^{-m+ i\beta} f &= \sum_{k=0}^{n_2 + m } A_{n-k}^{i\beta} S_k^{-m-1}f + \sum_{k = n_2 + m + 1}^n A_{n-k}^{i\beta} S_k^{-m-1} f \\
&=: \Sigma_1 + \Sigma_2.
\endaligned \]
We may majorize the modulus of the second sum by
\[ \lesssim e^{2 \beta^2} \sum_{k = n_2 + m +1}^n (k+1)^{-m} S_*^{-m-1} f \lesssim
e^{2\beta^2} \cdot (n+1)^{1-m} S_*^{-m-1} f, \]
since Lemma \ref{B} allows us to estimate
\[ |A_{n-k}^{i\beta}| \leq a_0 e^{2 \beta^2},\]
and we have $n > 10m$.

We now focus on estimating $(n+1)^{m-1} \cdot \Sigma_1$, where we recall
\begin{equation}\label{e:sigma1}
\Sigma_1 := \sum_{k=0}^{n_2 + m } A_{n-k}^{i\beta} S_k^{-m-1}f.
\end{equation}
We will handle $\Sigma_1$ by repeated applications of identity \eqref{e:sbp}.

Indeed, we may express
\[ \aligned
\Sigma_1 &= A_{n-(n_2+m)}^{i\beta} S_{n_2+m}^{-m-1} f + \dots + A_{n - (n_2 + m -l)}^{-l + i\beta} S_{n_2+m-l}^{-m-1+l} f \\
& \qquad + \dots + A_{n - (n_2 + 1)}^{-(m-1) + i\beta} S_{n_2 +1}^{-2} f \\
& \qquad \qquad + \sum_{k=0}^{n_2} A_{n-k}^{-m + i\beta} S_k^{-1} f. \endaligned \]

Using Lemma \ref{B} and the definition of $S_*^{-t}$ for $1 \leq t \leq m$, we may bound the top term:
\[ \aligned 
| A_{n-(n_2+m)}^{i\beta} S_{n_2+m}^{-m-1} f | &\leq a_0 e^{2 \beta^2} (n_2 + m + 1)^{-m} S_*^{-m-1}f \\
&\lesssim e^{2\beta^2} (n+1)^{-m} S_*^{-m-1}f; \endaligned \]
the intermediate terms, $1 \leq l \leq m-1$:
\[ \aligned 
&|A_{n - (n_2 + m -l)}^{-l + i \beta} S_{n_2+m -l }^{-m -1 + l} f | \\
& \qquad \leq (n-(n_2+m-l) + 1)^{-l} \cdot B_l e^{3\beta^2} \cdot ( n_2 + m - l + 1)^{-m+l} S_*^{-m+l}f \\
& \qquad \qquad \lesssim_l e^{3\beta^2} (n+1)^{-m} S_*^{-m+l} f; \endaligned \]
and the sum
\[ \aligned 
&|\sum_{k=0}^{n_2} A_{n-k}^{-m + i\beta} S_k^{-1}f| \\
& \qquad \leq (n_2 + 1) \cdot \max_{0 \leq k \leq n_2} (n-k+1)^{-m} \cdot B_m e^{3\beta^2} S_*^{-1}f \\
& \qquad \qquad \lesssim_m e^{3\beta^2} (n+1)^{1-m} S_*^{-1} f. \endaligned\]

Summing and taking into account the normalizing factor of $(n+1)^{m-1}$ completes the proof.
\end{proof}

\end{document}